\newtheorem{theorem}{Theorem}
\newtheorem{lemma}[theorem]{Lemma}
\newtheorem{corollary}[theorem]{Corollary}
\theoremstyle{definition}
\newtheorem{remark}[theorem]{Remark}
\newcommand{\Crm}{\mathrm{C}}
\newcommand{\Lrm}{\mathrm{L}}
\newcommand{\Dcal}{\mathcal{D}}
\newcommand{\Hcal}{\mathcal{H}}
\newcommand{\Ical}{\mathcal{I}}
\newcommand{\Nbf}{\mathbf{N}}
\newcommand{\Nbb}{\mathbb{N}}
\newcommand{\Pbb}{\mathbb{P}}
\DeclareMathOperator*{\wslim}{w*-lim}
\DeclareMathOperator{\curl}{curl}
\DeclareMathOperator{\rank}{rank}
\DeclareMathOperator{\spn}{span}
\newcommand{\set}[2]{\left\{\, #1 \  \textup{\textbf{:}}\  #2 \,\right\}}
\newcommand{\setb}[2]{\bigl\{\, #1 \  \textup{\textbf{:}}\  #2 \,\bigr\}}
\newcommand{\setBB}[2]{\biggl\{\, #1 \  \textup{\textbf{:}}\  #2 \,\biggr\}}
\newcommand{\dpr}[1]{\langle #1 \rangle}
\newcommand{\cl}[1]{\overline{#1}}
\newcommand{\dd}{\;\mathrm{d}}
\newcommand{\R}{\mathbb{R}}
\newcommand{\loc}{\mathrm{loc}}
\newcommand{\sym}{\mathrm{sym}}
\newcommand{\toweakstar}{\overset{*}\rightharpoonup}
\newcommand{\todown}{\downarrow}
\newcommand{\BV}{\mathrm{BV}}
\newcommand{\eps}{\epsilon}
\DeclareMathOperator{\Tan}{Tan}
\newcommand{\Leb}{\mathscr L}
\renewcommand{\eps}{\varepsilon}
\renewcommand{\phi}{\varphi}
\newcommand{\M}{\mathcal M}
\renewcommand*\env@matrix[1][*\c@MaxMatrixCols c]{%
  \hskip -\arraycolsep
  \let\@ifnextchar\new@ifnextchar
  \array{#1}}
\DeclareMathOperator{\Gr}{Gr}
\DeclareMathOperator{\A}{\mathcal A}
\DeclareMathOperator{\Div}{div}
\title[]{An elementary approach to \\ the dimension of measures satisfying \\ a 
first-order linear PDE constraint} 
\author[A.~Arroyo-Rabasa]{Adolfo Arroyo-Rabasa}
\date{\today}
\address{Mathematics Institute, The University of Warwick}
\email{\href{mailto:adolfo.arroyo-rabasa@warwick.ac.uk}{adolfo.arroyo-rabasa@warwick.ac.uk}}
\subjclass[2010]{Primary 28A78, 49Q15; Secondary 35F35.}
\keywords{Hausdorff dimension, $\mathcal{A}$-free measure, PDE constraint, tangent measure, structure theorem, normal current}
\thanks{This project has received funding from the European Research Council (ERC) under the European Union's Horizon 2020 research and innovation programme, grant agreement No 757254 (SINGULARITY)}
\begin{document}
\maketitle

%%%%%%%%%%%%%%%%%%%%%%%%%%%%%%%%%%%%%%%%%%%%%%%%%%%%%%%%%%%%%%%%%%%%%%%%%%%%%%%%%%%%%%%%%%%%%%%%%%%%%%%%%%%%%%%%%%%%%%%%%%%%%%%%%%
%%%%%%%%%%%%%%%%%%%%%%%%%%%%%%%%%%%%%%%%%%%%%%%%%% ABSTRACT %%%%%%%%%%%%%%%%%%%%%%%%%%%%%%%%%%%%%%%%%%%%%%%%%%%%%%%%%%%%%%%%%%%%%%
%%%%%%%%%%%%%%%%%%%%%%%%%%%%%%%%%%%%%%%%%%%%%%%%%%%%%%%%%%%%%%%%%%%%%%%%%%%%%%%%%%%%%%%%%%%%%%%%%%%%%%%%%%%%%%%%%%%%%%%%%%%%%%%%%%

\begin{abstract}
	We give a simple criterion on the set of probability tangent measures $\Tan(\mu,x)$ of a positive Radon measure $\mu$, which yields lower bounds on the Hausdorff dimension of $\mu$. As an application, we give an \emph{elementary} and purely algebraic proof of the sharp Hausdorff dimension lower bounds for first-order linear PDE-constrained measures;  bounds for closed (measure) differential forms and normal currents are further discussed. A \emph{weak} structure theorem in the spirit of [Ann. Math. 184(3) (2016), pp. 1017--1039] is also discussed for such measures. 
\end{abstract}

%%%%%%%%%%%%%%%%%%%%%%%%%%%%%%%%%%%%%%%%%%%%%%%%%%%%%%%%%%%%%%%%%%%%%%%%%%%%%%%%%%%%%%%%%%%%%%%%%%%%%%%%%%%%%%%%%%%%%%%%%%%%%%%%%%
%%%%%%%%%%%%%%%%%%%%%%%%%%%%%%%%%%%%%%%%%%%%%%%%%% ABSTRACT %%%%%%%%%%%%%%%%%%%%%%%%%%%%%%%%%%%%%%%%%%%%%%%%%%%%%%%%%%%%%%%%%%%%%%
%%%%%%%%%%%%%%%%%%%%%%%%%%%%%%%%%%%%%%%%%%%%%%%%%%%%%%%%%%%%%%%%%%%%%%%%%%%%%%%%%%%%%%%%%%%%%%%%%%%%%%%%%%%%%%%%%%%%%%%%%%%%%%%%%%

\section{Introduction}

The question of determining the dimension of a vector-valued Radon measure satisfying a PDE-constraint is a longstanding one. A good starting point are $\curl$-free measure fields. The seminal work of \textsc{De Giorgi}~\cite{de-giorgi1961frontiere-orien} on the structure of \emph{sets of finite perimeter} and the \emph{co-area formula}~\cite{fleming1960an-integral-for} from \textsc{Fleming \& Rishel} yield the estimate $|Du| \ll \Hcal^{d-1}$ for all distributional gradients $Du$ represented by a Radon measure. Later on, \textsc{Federer}  extended (see~\cite[Sec.~4.1.21]{federer1969geometric-measu}) this result to the estimate $\|T\| \ll \Ical^m \ll \Hcal^m$ for $m$-dimensional normal currents $T \in \Nbf_{m}(\R^d)$.\footnote{Here, $\Ical^m$ is the $m$-integral-geometric measure on $\R^d$.} 
Recently, these results have been further extended to deal with more general differential constraints (in the context of $\A$-free measures). Namely, in~\cite{arroyo-rabasa2018dimensional-est}  it is shown that $|\mu| \ll \Ical^{\ell_{\Pbb^k}} \ll \Hcal^{\ell_{\Pbb^k}}$ for measures $\mu$ satisfying a generic constraint of the form $P(D) \mu = 0$, where $P(D)$ is a $k$th-order linear partial differential operator with constant coefficients and $\ell_{\Pbb^k}$ is a positive integer depending only on the principal symbol ${\Pbb^k}$ of $P(D)$. This $\ell_{\Pbb^k}$ dimensional estimate turns out to be sharp for first-order operators; for higher-order operators it is an open question whether it remains an optimal bound (see ~\cite[Conjecture~1.6]{arroyo-rabasa2018dimensional-est}). 

The compendium of results mentioned above are of  stronger structural character than the ones presented on this note, since only bounds on the \emph{Hausdorff dimension} of such measures will be discussed here. However, they also require a significantly stronger machinery. 
Our main interest is to give a self-contained and \enquote{elementary} proof of the Hausdorff dimension (sharp) bounds for measures $\mu \in \M(\Omega, E)$ solving, in the sense of the distributions, an equation of the form 
\begin{equation}\label{op}
P(D)\mu \coloneqq \sum_{i = 1}^d P_i [\partial_i \mu]  + P_0 \mu = 0\,, \quad P_0,P_i \in F \otimes E\,,
\end{equation}
where $E,F$ are finite dimensional euclidean spaces. 

The angular stone of our proof rests on a rather simple \emph{invariance criterion} 
affecting all normalized blow-ups of a given positive Radon measure $\sigma$, which effortlessly yields a lower bound on the {Hausdorff dimension} $\dim_\Hcal(\sigma)$, where as usual
\[
\dim_\Hcal(\sigma) \coloneqq \sup\setb{0 \le \kappa \le d }{\sigma \ll \Hcal^\kappa}\,.
\]
This criterion (contained in Lemma~\ref{thm:dim}) links the \emph{vector-space dimension}, of those directions with respect to which a blow-up of $\sigma$ may be an invariant measure, to a lower bound of the {Hausdorff dimension}. 
%The idea is to give a \emph{purely algebraic} and elementary proof of the aforementioned estimates by exploiting the criterion in Lemma~\ref{thm:dim}. 
In particular, this re-directs the study of dimensional estimates for  measures satsifying~\eqref{op}, to the study of the structural rigidity of their sets $\Tan(|\mu|,x)$ of \emph{probability tangent measures} (described in Sec.~\ref{prel}). (A similar method for establishing dimensional estimates has been considered in~\cite{ambrosio1997a-measure-theor} by \textsc{Ambrosio \& Soner}; see also~\cite{fragala1999on-some-notions} for the slightly more restrictive context of \emph{tangent spaces} $T_\sigma(x) \subset \R^d$  introduced by \textsc{Bouchitt\'e}, \textsc{Buttazzo} and \textsc{Seppecher}.) 

The advantage of this viewpoint lies in the fact that the principal symbol
\[
\xi \mapsto \Pbb(\xi) \coloneqq \sum_{i = 1}^d \xi_iP_i, \quad \xi \in \R^d,
\] 
being linear as function of $\xi$, precisely characterizes those directions where tangent measures are invariant measures. Thus, allowing one to define a dimension associated to the principal part of the operator:
\begin{equation}\label{eq:11}
\ell_{\Pbb} \coloneqq \min_{e \in E\setminus \{0\}} \,\dim\big(\,\{\Pbb [e] \equiv 0\}^\perp\,\big)\,.%\Big\{ \max\setB{\ell \in \Nbb}{\dim(\,\Sigma_{\Abb}(P)^\perp) \ge \ell}\Big\}.
\end{equation}
Here, we have used the short-hand notation $\{\Pbb [e] \equiv 0\} \coloneqq \set{\xi}{\Pbb(\xi)[e] = 0}$.
Note that this definition of dimension agrees with the definition given in~\cite[eq.~(1.6)]{arroyo-rabasa2018dimensional-est}. It may be worth to mention that, in the context of cocancelling operators (introduced by \textsc{Van Schaftingen} \cite{van-schaftingen2013limiting-sobole} and further extended in~\cite{arroyo-rabasa2018dimensional-est}; see also~\cite{raita2018l1-estimates-an,spector2018optimal-embeddi}), $P(D)$ is an $(\ell_{\Pbb} - 1)$-cocancelling operator. 

Our main result is contained in the following theorem:
	\begin{theorem}\label{thm:2}
	Let $\Omega \subset \R^d$ be an open set, let $P(D)$ be a first-order differential operator as in~\eqref{op}, and let $\mu \in \M(\Omega;E)$ be a solution of the equation
	\[
	P(D)\mu = 0 \quad \text{in the sense of distributions on $\Omega$}.
	\]
Then,
	\[
	\dim_{\Hcal}(|\mu|) \ge \ell_{\Pbb}.
	\]
Moreover, this  estimate is sharp since the measure
\[
\mu = e \, \Hcal^{\ell_\Pbb} \llcorner \{\Pbb[e] \equiv 0\}^\perp
\]
is a solution of~\eqref{op} on $\R^d$,
whenever $e \in E$ is any vector at which the minimum in~\eqref{eq:11} is attained.
\end{theorem}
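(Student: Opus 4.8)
The plan is to deduce both assertions from the invariance criterion of Lemma~\ref{thm:dim}, applied with $\kappa = \ell_\Pbb$. Thus the crux is to show that for $|\mu|$-almost every $x_0$ there is a subspace $W_{x_0}\subset\R^d$ with $\dim W_{x_0}\ge\ell_\Pbb$ such that \emph{every} probability tangent measure $\nu\in\Tan(|\mu|,x_0)$ is invariant under all translations by vectors of $W_{x_0}$; the lower bound on $\dim_\Hcal(|\mu|)$ then follows at once.

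To establish this, first I would fix the polar decomposition $\mu = g\,|\mu|$, with $g\colon\Omega\to E$ Borel and $|g| = 1$ $|\mu|$-a.e., and restrict attention to those $x_0$ — a set of full $|\mu|$-measure, by the Besicovitch differentiation theorem — that are Lebesgue points of $g$ with respect to $|\mu|$ and at which $\Tan(|\mu|,x_0)\ne\emptyset$. Set $e\coloneqq g(x_0)\in E\setminus\{0\}$. For $r>0$ write $T_{x_0,r}(y)\coloneqq(y-x_0)/r$ and $\sigma_r\coloneqq c_r\,(T_{x_0,r})_\#\mu$, where $c_r>0$ are the normalizing constants used in the definition of $\Tan$. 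Given $\nu\in\Tan(|\mu|,x_0)$, pick $r_n\downarrow 0$ with $c_{r_n}(T_{x_0,r_n})_\#|\mu|\toweakstar\nu$ locally; the Lebesgue-point property of $g$, combined with the local uniform mass bounds built into $\Tan$, forces the vector-valued blow-ups to converge along the same sequence, $\sigma_{r_n}\toweakstar e\,\nu$, locally in $\M(\R^d;E)$.

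Next I would carry out the scaling computation, which is the one genuinely technical point. A direct change of variables shows that, because differentiating $\sigma_r$ produces a prefactor $r$ whereas the zeroth-order term does not, the rescaled measures satisfy
\[
\sum_{i=1}^d P_i[\partial_i\sigma_r] + r\,P_0\sigma_r = 0 \qquad \text{in } \Dcal'(\R^d).
\]
Passing to the limit along $r = r_n\to 0$, the term $r_nP_0\sigma_{r_n}$ has locally vanishing total variation and $\partial_i\sigma_{r_n}\to\partial_i(e\,\nu)$ in $\Dcal'$, so the limit solves the homogeneous principal-part system $\sum_{i=1}^d P_i[e]\,\partial_i\nu = 0$. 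Testing this $F$-valued identity against an arbitrary $w\in F^*$ gives $\sum_i\dpr{w,P_i[e]}\,\partial_i\nu = 0$, i.e.\ the directional derivative of $\nu$ along $\zeta_w\coloneqq(\dpr{w,P_i[e]})_{i=1}^d$ vanishes; since a Radon measure with vanishing derivative in a given direction is invariant under the corresponding translation flow, $\nu$ is invariant under every translation by the subspace $W_e\coloneqq\spn\setn{\zeta_w}{w\in F^*}$. It remains to observe the elementary duality $W_e = \setn{\xi\in\R^d}{\Pbb(\xi)[e] = 0}^\perp = \{\Pbb[e]\equiv 0\}^\perp$ (both spaces are the orthogonal complement of the kernel of $\xi\mapsto\sum_i\xi_iP_i[e]$), whence $\dim W_e = \dim\{\Pbb[e]\equiv 0\}^\perp\ge\ell_\Pbb$ by~\eqref{eq:11}. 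Since $W_e$ depends only on $x_0$ (through $e = g(x_0)$), this proves the claim, and Lemma~\ref{thm:dim} delivers $\dim_\Hcal(|\mu|)\ge\ell_\Pbb$.

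For the sharpness part I would argue directly. Choose $e$ attaining the minimum in~\eqref{eq:11} and orthonormal coordinates with $\{\Pbb[e]\equiv 0\}^\perp = \R^{\ell_\Pbb}\times\{0\}$, so that $\{\Pbb[e]\equiv 0\} = \{0\}\times\R^{d-\ell_\Pbb} = \spn\setn{\mathbf e_i}{i>\ell_\Pbb}$. Writing $\mu = e\,\bigl(\Leb^{\ell_\Pbb}\otimes\delta_0\bigr) = e\,\Hcal^{\ell_\Pbb}\llcorner(\R^{\ell_\Pbb}\times\{0\})$, one has $\partial_i\mu = 0$ for $i\le\ell_\Pbb$ by translation invariance of Lebesgue measure, while $P_i[e] = \Pbb(\mathbf e_i)[e] = 0$ for $i>\ell_\Pbb$; hence $\sum_iP_i[\partial_i\mu] = 0$, so $\mu$ solves~\eqref{op} (with $P_0 = 0$, or with any $P_0$ satisfying $P_0e = 0$), while $\dim_\Hcal(|\mu|) = \dim_\Hcal\bigl(\Hcal^{\ell_\Pbb}\llcorner(\R^{\ell_\Pbb}\times\{0\})\bigr) = \ell_\Pbb$, so the bound cannot be improved. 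The main obstacle I anticipate is making the limit passage in the rescaled equation rigorous together with the identification $\sigma_{r_n}\toweakstar e\,\nu$; once those are in hand, the remainder of the argument is a change of variables and linear algebra.
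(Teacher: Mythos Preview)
Your proposal is correct and follows essentially the same route as the paper: pass to a tangent measure, use the scaling rule to kill the zeroth-order term so that the principal part $P^1(D)(e\,\nu)=0$ survives, read off that $\nu$ is $\{\Pbb[e]\equiv 0\}^\perp$-invariant, and invoke Lemma~\ref{thm:dim}. The only cosmetic difference is that the paper mollifies the tangent measure on $B_1$ to obtain the classical differential inclusion $\nabla|\sigma_\delta|\in\{\Pbb[e]\equiv 0\}$ before passing back to the limit, whereas you test the distributional identity directly against $w\in F^*$; both arguments yield the same invariance conclusion.
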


\begin{remark}
The proof of Theorem~\ref{thm:2} does not require, in any way, the  \emph{structure theorem for PDE-constrained measures}~\cite[Theorem~1.1]{de-philippis2016on-the-structur}.
\end{remark}

%In Theorem~\ref{thm:2} we shall see how the invariance criterion yields a rather simple proof of sharp dimensional estimates for first-order constraints with constant coefficients. 
At all points $x$ where $[P(D)\circ\frac{\dd \mu}{\dd |\mu|}(x)]$ is elliptic, that is, precisely when the polar $\frac{\dd \mu}{\dd |\mu|}(x)$ {does not} belong to the \emph{wave cone} set
\[
\Lambda_{\Pbb} \coloneqq \bigcup_{\xi \in \R^d} \ker \Pbb(\xi) \subset E,
\] 
the sets $\Tan(|\mu|,x)$ turn out to be trivial (containing only fully-invariant measures). The invariance criterion then allows us to give the following \emph{soft} version of~\cite[Theorem~1.1]{de-philippis2016on-the-structur} (see also~\cite{alberti1993rank-one-proper} in the case of gradients):

\begin{corollary}[weak structure theorem]\label{cor} Let $\Omega \subset \R^d$ be an open set, let $P(D)$ be a first-order differential operator as in~\eqref{op}, and let $\mu \in \M(\Omega;E)$ be a solution of the equation
	\[
	P(D)\mu = 0 \quad \text{in the sense of distributions on $\Omega$}.
	\]
Then, 
	\[
		\textstyle{|\mu| \llcorner \setb{x \in \Omega}{\frac{\dd \mu}{\dd |\mu|}(x) \notin \Lambda_{\Pbb}} \ll \Hcal^\kappa \quad \text{for all $0 \le \kappa < d$}.}
	\]
\end{corollary}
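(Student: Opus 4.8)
The plan is to apply the invariance criterion of Lemma~\ref{thm:dim} to the positive Radon measure $\sigma \coloneqq |\mu|\llcorner A$, where $A \coloneqq \setb{x \in \Omega}{\frac{\dd \mu}{\dd |\mu|}(x) \notin \Lambda_{\Pbb}}$, after proving that for $\sigma$-almost every $x$ \emph{every} measure in $\Tan(\sigma,x)$ is a positive constant multiple of the Lebesgue measure $\Leb^d$, hence invariant with respect to all directions of $\R^d$. The reduction to $\sigma$ is made through the standard localization property of tangent measures recalled in Section~\ref{prel}: for $|\mu|$-a.e.\ (thus $\sigma$-a.e.)\ $x \in A$ one has $\Tan(\sigma,x) = \Tan(|\mu|,x)$ up to normalization, so it suffices to understand $\Tan(|\mu|,x)$ at $|\mu|$-a.e.\ $x \in A$.

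The first ingredient is the blow-up analysis already carried out in the proof of Theorem~\ref{thm:2}: for $|\mu|$-a.e.\ $x$, setting $e_x \coloneqq \frac{\dd \mu}{\dd |\mu|}(x)$, one has $\Pbb(D)(e_x\,\nu) = 0$ in the sense of distributions on $\R^d$ for every $\nu \in \Tan(|\mu|,x)$. Indeed, translating $\mu$ to $x$, dilating by $r^{-1}$ and normalizing produces measures $\mu_{x,r}$ satisfying $\Pbb(D)\mu_{x,r} + r\,P_0\,\mu_{x,r} = 0$; the zeroth-order term carries the extra factor $r \to 0$ and drops out in the weak-$*$ limit, while by Besicovitch differentiation the polar of $\mu_{x,r}$ converges $|\mu|$-a.e.\ to the constant vector $e_x$, so any weak-$*$ limit of $\mu_{x,r}$ has the form $e_x\,\nu$ with $\nu \in \Tan(|\mu|,x)$ and solves the principal equation. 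I expect this first ingredient — the stability of the constraint under blow-up, together with the almost-everywhere convergence of the polar — to be the genuinely delicate point; fortunately it is exactly a sub-step of the proof of Theorem~\ref{thm:2}, so it may be invoked rather than reproved.

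The second ingredient is purely algebraic and uses the hypothesis $x \in A$. The assignment $\nu \mapsto \Pbb(D)(e_x\,\nu) = \sum_{i=1}^d (P_i[e_x])\,\partial_i \nu$ sends a scalar distribution $\nu$ to the $F$-valued distribution $M_x(\nabla \nu)$, where $M_x \colon \R^d \to F$, $M_x\xi \coloneqq \Pbb(\xi)[e_x] = \sum_{i=1}^d \xi_i\, P_i[e_x]$, is linear with kernel $\{\Pbb[e_x] \equiv 0\}$. Since $x \in A$ means $e_x \notin \Lambda_{\Pbb}$, i.e.\ $\Pbb(\xi)[e_x] \neq 0$ for all $\xi \neq 0$, the map $M_x$ is injective; therefore $\Pbb(D)(e_x\,\nu) = 0$ forces $\nabla \nu = 0$, and the constancy theorem for distributions, together with $\nu \geq 0$ and $\nu \neq 0$, gives $\nu = c\,\Leb^d$ for some $c > 0$. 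Consequently, for $\sigma$-a.e.\ $x$ every $\nu \in \Tan(\sigma,x) = \Tan(|\mu|,x)$ is invariant along a $d$-dimensional space of directions, so Lemma~\ref{thm:dim} yields $\dim_{\Hcal}(\sigma) \geq \kappa$, i.e.\ $\sigma \ll \Hcal^\kappa$, for every $0 \leq \kappa < d$, which is the assertion of Corollary~\ref{cor}.
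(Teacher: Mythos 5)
Your proof is correct and takes essentially the same route as the paper: both invoke the blow-up analysis from the proof of Theorem~\ref{thm:2} (the paper cites the invariance statement~\eqref{eq:final} directly, whereas you re-derive full invariance via the injectivity of $M_x$ together with the constancy theorem, which amounts to the same thing since $\{\Pbb[e_x]\equiv 0\}^\perp=\R^d$ precisely when $e_x\notin\Lambda_\Pbb$), and both then feed this into the invariance criterion of Lemma~\ref{thm:dim} with $\ell=d$. Your explicit appeal to the localization property $\Tan(\sigma,x)=\Tan(|\mu|,x)$ for $\sigma$-a.e.\ $x$ is a detail the paper leaves implicit, but this is a presentational rather than a substantive difference.
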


\begin{remark} The results contained in Theorem~\ref{thm:2} and Corollary~\ref{cor} apply to solutions of the inhomogeneous equation 
\[
P(D)\mu = \tau \in \M(\Omega;F).
\]
To see this, let $\tilde \mu = (\mu,\tau)$, $\tilde E = E \times F$, and consider the operator $\tilde P(D)\tilde \mu = P(D)\mu - \tau$.
\end{remark}

\subsection*{Further comments}

%\emph{Higher order PDOs.} The dimensional estimate in Theorem~\ref{thm:2} is (in general) only sharp for first-order operators; a sharp estimate for higher order operators remains an open question.
	%	A generalization of this technique to higher seems unlikely or challenging in the following way. It is highly non-trivial to chacaterize those operators (of arbitrary order) $\A$ for which every $\A$-free measure satisfies (for some sharp $\ell$) the tangent cone criterion from Theorem~\ref{thm:dim}. 
%	\item however, in~\cite{AR18/BD} it is shown that the second-order operator $\curl \curl$ on $\M(\Omega;\mathrm{SLin}(\R^d;\R^d))$ ---associated to symmetric gradients of \emph{functions of bounded deformation} $u \in \BD(\Omega)$--- satisfies~\eqref{eq:C} with $\ell = \ell_{\curl \curl} = 1$.\footnote{Here, $\BD(\Omega) \coloneqq \set{u \in \Lrm^1(\Omega;\R^d}{\frac 12(Du + Du^T) \in \M(\Omega;\mathrm{SLin}(\R^d;\R^d))}$. Moreover, a measure $\mu$ is $(\curl \curl)$-free if and only if there exists $u \in \BD(\Omega)$ with $\mu = Eu$, where 	\[
%	\curl \curl \mu \coloneqq \bigg(\sum_{i=1}^d \partial_{ik} \mu_{i}^j+\partial_{ij} \mu_{i}^k-\partial_{jk} \mu_{i}^i-\partial_{ii} \mu_{j}^k\bigg)_{jk} \qquad j,k=1,\ldots,d\,,
%	\] } This yields the known (sharp) estimate $\dim_\Hcal(|Eu|) \ge  d-1$ for $u \in \BD(\Omega)$. It also gives a weak rank-one theorem for $\BD(\Omega)$. 
%	\item The conclusions of Corollary~\ref{cor} do not comprise a version of the structure theorem~\cite[Theorem~1.1]{de-philippis2016on-the-structur}. 
%	The main reason is that, 

Both Theorem~\ref{thm:2} and Lemma~\ref{thm:dim} \emph{do not} lead to rectifiability, nor estimates of the form $|\mu| \ll \Ical^{\ell_\Pbb}$, or even $|\mu| \ll \Hcal^{\ell_\Pbb}$ by the methods presented on this note. This assertion is in line with the following observation. The shortcoming of Corollary~\ref{cor} ---with respect to the (strong) structure theorem--- lies in the requirement of $\kappa$ being strictly smaller than $d$.
As it has been remarked by \textsc{De Lellis} (see~\cite[Proposition~3.3]{de-lellis2008a-note-on-alber}), \textsc{Preiss'} example~\cite[Example~5.8(1)]{preiss1987geometry-of-mea} of a purely singular measure with only trivial \emph{tangent measures} hinders the hope for a traditional \emph{blow-up strategy} leading to the estimate in the critical case $\kappa = d$.\footnote{The definition of \emph{tangent measure} introduced by Preiss in~\cite{preiss1987geometry-of-mea} is slightly different than our definition of probability tangent measure. However, the same triviality in the cited example can be inferred for our notion of tangent measure (see~\cite[Remark~14.4(1)]{mattila1995geometry-of-set}).}
	
	In a forthcoming paper~\cite{arroyo-rabasa2018rigidity-of-tan}, it will be shown that all functions $u : \Omega \to \R^d$ of \emph{bounded deformation} satisfy the following rigidity property: every probability tangent measure $\tau \in \Tan(Eu,x)$ can be split as a sum of $1$-directional measures (here, $Eu = \frac{1}{2}(Du + Du^\mathrm{t}) \in \M(\Omega;\sym(\R^d \otimes \R^d))$ is the distributional symmetric gradient of $u$). Hence, by Lemma~\ref{thm:dim}, one may recover the dimensional estimate $\dim_\Hcal(|Eu|) \ge d-1$ from~\cite{ambrosio1997fine-properties} through a completely different method. Note however that symmetric gradients satisfy the St.~Venant compatibility conditions (see~\cite[Example~3.10(e)]{fonseca1999mathcal-a-quasi}) which is a 2nd-order differential constraint.

	\subsection*{Organization} Applications of our results for several relevant first-order operators are discussed in Section~\ref{sec:2}; dimension bounds for closed differential forms and normal currents are discussed in  Corollaries~\ref{cor:1}-\ref{cor:3}. A brief list of definitions (required for the proofs) and the invariance criterion (contained in Lemma~\ref{thm:dim}) are given in Section~\ref{prel}. Section~\ref{sec:proofs} is devoted to the proofs. Lastly, an appendix on multilinear algebra operations has been included, this may be of use for the applications on differential forms and normal currents discussed below.

%	\item \emph{A weaker criterion.} \textsc{Preiss'} original definition of tangent cone $\Tan(\mu,x)$ consists of all  of all global {weak-$*$} limits of global blow-up sequences. I do not know if 
%        the conclusions of Lemma~\ref{thm:dim}  hold when~\eqref{eq:C} is satisfied on $\Tan(\mu^s,x)$. This would comprise a weaker condition since in general there exists a (non-surjective) identification $\Tan(\mu^s,x) \to \Tan(\mu^s,x)$ (cf.~\cite[Remark~14.4(3)]{mattila1995geometry-of-set}).
%        \item %Regardless of the strong dimensional estimate, 
%	\emph{About the weakness of Corollary~\ref{cor}:} the estimate $\dim_\Hcal(|\mu^s| \llcorner S_{\mu,\Abb}) \ge d$ \emph{does not} imply that $\mu \llcorner S_{\mu,\Abb}$ is the zero measure; \textsc{Preiss} constructed~\cite[Example~5.9]{preiss1987geometry-of-mea} a purely singular  measure $\Phi$ on $\R$ with trivial tangent cone $\Tan(\Phi,x) \cong \R$ for $\Phi$-almost every $x$; however, by our results $\dim_\Hcal(\Phi) = 1$.%\footnote{\textsc{Preiss'} definition of tangent cone is slightly more restricted than the one used here. \note{Can we check it the trivial cone also holds for $\Tan$?}}
%	\item The boundary operator on $\partial : \Nbf_k(U) \to \Mcal(U;\bigwedge^{k-1} \R^n)$, acting on normal $k$-currents, can be seen as the first-order operator 
%	\[
%	d^* : \M(U;\R^{\binom nk}) \to \M(U;\R^{\binom nk}) 
% 	\]  

\subsection*{Acknowledgments} I gratefully thank G.~de Philippis and F.~Rindler for introducing me to this problem, and to other related questions. I would also like to thank J. Hirsch and P. Gladbach for several fruitful discussions about this subject.

%	Let $B \subset U$ be a $\sigma$-finite set with respect to $\Hcal^r$, for some $0 \le r < d$. Then
%	\[
%	\textstyle{|\mu|\big(B \cap \setb{x \in \Omega}{\frac{\dd \mu}{\dd |\mu|}(x) \notin \Lambda_\Abb}\big) = 0.}
%	\]

\section{Applications}\label{sec:2}

In this section we discuss explicit dimensional bounds for several relevant first-order differential operators. 

Here and in what follows $\Omega \subset \R^d$ is an open set.

\subsection{Gradients} The space $\BV(\Omega;\R^m)$ of functions of bounded variation  consists of functions $u : \Omega \to \R^m$ whose distributional derivative $Du$ can be represented by a Radon measure $\mu$ in $\M(\Omega;\R^m \otimes \R^d)$. We recall (see~\cite{fonseca1999mathcal-a-quasi}) that the gradient $\mu = Du$ is (locally) a $\curl$-free field in the sense that
\[
\curl (\mu) \coloneqq (\partial_i \mu_{kj} - \partial_j \mu_{ki})_{kij} = 0, \qquad 1 \le i,j \le d, \; 1 \le k \le m. 
\]
In the case $P(D) = \curl$ we have 
\[
\ker \Pbb_{\curl}(\xi) = \setb{a \otimes \xi}{a \in \R^m}, \quad \xi \in \R^d, 
\]
and therefore $\ell_{\curl} = d -1$. Theorem~\ref{thm:2} then recovers the well-known (see~\cite{ambrosio1997fine-properties}) dimensional bound for gradients 
\[
u \in \BV(\Omega;\R^m) \; \Longrightarrow \; \dim_{\Hcal}(|Du|) \ge d-1.
\]   

\subsection{Fields of bounded divergence} Consider the divergence operator defined on matrix-fields $\boldsymbol \mu \in \M(\Omega;\R^k \otimes \R^d)$ defined as
\begin{equation}\label{div}
\Div \boldsymbol \mu = \Bigg(\sum_{i = 1}^d \partial_i \mu_{ij}\Bigg)_j, \quad 1 \le j \le k.
\end{equation}
In this case we get $\Pbb_{\Div}(\xi)[M] = M \cdot \xi$ over the space of tensors $M \in \R^k \otimes \R^d$, and  $\{\Pbb_{\Div}[M] \equiv 0\}^\perp = (\ker M)^\perp \cong \mathrm{ran} \, M$. It follows from Riesz' representation theorem ($\frac{\dd \boldsymbol \mu}{\dd |\boldsymbol \mu|}(x) \neq 0$ for $|\boldsymbol \mu|$-a.e. $x$) and Theorem~\ref{thm:2} that
\[
\Div \boldsymbol \mu \in  \M(\Omega;\R^k) \quad \Longrightarrow \quad \dim_\Hcal(|\boldsymbol \mu|) \ge 1.
\] 
In a further refinement, we get the following corollary:

\begin{corollary}\label{prop2}Let $\boldsymbol \mu \in \M(\Omega;\R^k \otimes \R^d)$ satisfy the non-homogeneous equation $\Div \boldsymbol \mu = \tau$ for some $\tau \in \M(\Omega;\R^k)$.
%where $\Div$ is the row-wise divergence defined in~\eqref{div}.
Further, assume the set
\[
\setBB{x \in \Omega}{\rank \frac{\dd \boldsymbol\mu}{\dd |\boldsymbol\mu^s|}(x) \ge \ell}
\]
has full $|\boldsymbol\mu^s|$-measure on $\Omega$. Then, $\dim_{\Hcal}(|\boldsymbol\mu|) \ge \ell$.
\end{corollary}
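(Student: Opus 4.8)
The plan is to localise the blow-up analysis behind Theorem~\ref{thm:2}. For the divergence the subspace $\{\Pbb_{\Div}[M]\equiv 0\}^\perp=(\ker M)^\perp\cong\ran M$ has dimension exactly $\rank M$, so at a point where the polar of $\boldsymbol\mu$ has rank at least $\ell$ one expects the probability tangent measures of $|\boldsymbol\mu|$ to be invariant along an $\ell$-dimensional subspace; restricting to the singular part $\boldsymbol\mu^s$ of $\boldsymbol\mu$, where the hypothesis grants $\rank\ge\ell$, and then feeding this into Lemma~\ref{thm:dim} should deliver the claimed estimate.

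We may assume $\ell\le d$. Let $\boldsymbol\mu=\boldsymbol\mu^{\mathrm{ac}}+\boldsymbol\mu^s$ be the Lebesgue decomposition with respect to $\Leb^d$. Since $|\boldsymbol\mu^{\mathrm{ac}}|\ll\Leb^d\ll\Hcal^\kappa$ for every $\kappa\le d$ and $|\boldsymbol\mu|=|\boldsymbol\mu^{\mathrm{ac}}|+|\boldsymbol\mu^s|$, it suffices to prove $\dim_\Hcal(|\boldsymbol\mu^s|)\ge\ell$. I would first record what the blow-up analysis used for Theorem~\ref{thm:2}, applied to $\boldsymbol\mu$, gives here: for $|\boldsymbol\mu^s|$-a.e.\ $x_0$ one has $\Tan(|\boldsymbol\mu^s|,x_0)=\Tan(|\boldsymbol\mu|,x_0)$ (because at $|\boldsymbol\mu^s|$-a.e.\ $x_0$ the measure $|\boldsymbol\mu^{\mathrm{ac}}|$ has vanishing $|\boldsymbol\mu^s|$-density and is thus negligible in every blow-up), and every $\lambda\in\Tan(|\boldsymbol\mu^s|,x_0)$ produces a tangent measure $M_0\,\lambda\in\Tan(\boldsymbol\mu,x_0)$ which, being a blow-up of a solution of $\Div\boldsymbol\mu=\tau$ with $\tau$ of order zero (so that $\tau$ scales away), satisfies $\Div(M_0\,\lambda)=0$ on $\R^d$; here $M_0\coloneqq\frac{\dd\boldsymbol\mu}{\dd|\boldsymbol\mu^s|}(x_0)$ is the $|\boldsymbol\mu|$-approximate limit of the polar at $x_0$. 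By hypothesis $\rank M_0\ge\ell$ for $|\boldsymbol\mu^s|$-a.e.\ such $x_0$.

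The algebraic core is then one line of Fourier analysis. Since $\Pbb_{\Div}(\xi)[M_0]=M_0\cdot\xi$ and $M_0$ is a constant matrix, the equation $\Div(M_0\,\lambda)=0$ reads $\Pbb_{\Div}(\xi)[M_0]\,\widehat\lambda(\xi)=0$ for every $\xi\in\R^d$; as $\widehat\lambda$ is scalar-valued this forces $\supp\widehat\lambda\subseteq\{\Pbb_{\Div}[M_0]\equiv 0\}=\ker M_0$, hence $\lambda$ is invariant under translations along $\{\Pbb_{\Div}[M_0]\equiv 0\}^\perp=(\ker M_0)^\perp$, a subspace of dimension $\rank M_0\ge\ell$. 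Thus, for $|\boldsymbol\mu^s|$-a.e.\ $x_0$, \emph{every} probability tangent measure $\lambda\in\Tan(|\boldsymbol\mu^s|,x_0)$ is an invariant measure along some subspace of dimension at least $\ell$, and Lemma~\ref{thm:dim} applied with $\sigma=|\boldsymbol\mu^s|$ gives $\dim_\Hcal(|\boldsymbol\mu^s|)\ge\ell$; combined with the previous paragraph this yields $\dim_\Hcal(|\boldsymbol\mu|)\ge\ell$.

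The step I expect to require the most care is matching the hypothesis---phrased through $\frac{\dd\boldsymbol\mu}{\dd|\boldsymbol\mu^s|}$---with the object that actually governs the invariance dimension in the blow-up: one must check that at $|\boldsymbol\mu^s|$-a.e.\ $x_0$ the blow-ups of $\boldsymbol\mu$ see precisely the polar of $\boldsymbol\mu^s$ (so that $\Tan(\boldsymbol\mu,x_0)=M_0\,\Tan(|\boldsymbol\mu^s|,x_0)$ with $M_0=\frac{\dd\boldsymbol\mu}{\dd|\boldsymbol\mu^s|}(x_0)\neq 0$), and that the order-zero term $\tau$ is genuinely negligible after rescaling. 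Both are routine consequences of Besicovitch differentiation and standard density estimates for Radon measures, and are already built into the proof of Theorem~\ref{thm:2}; the remaining passage, from the vanishing of the principal symbol to the Hausdorff-dimension estimate, is supplied verbatim by Lemma~\ref{thm:dim}.
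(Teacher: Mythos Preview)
Your proposal is correct and follows essentially the same route as the paper: compute $\{\Pbb_{\Div}[M]\equiv 0\}^\perp=(\ker M)^\perp$, note that its dimension is $\rank M\ge\ell$ at $|\boldsymbol\mu^s|$-a.e.\ point by hypothesis, invoke the invariance of tangent measures established in the proof of Theorem~\ref{thm:2} (equation~\eqref{eq:final}), and conclude via Lemma~\ref{thm:dim}. The paper's proof is just these two lines; you have simply unpacked them.

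One small caution on your Fourier step: in the paper's convention the probability tangent measures live on $\overline{B_1}$ and the limiting equation $\Div(M_0\lambda)=0$ holds only in $\Dcal'(B_1)$, not on all of $\R^d$, so the identity $(M_0\xi)\,\widehat\lambda(\xi)=0$ is not literally available (boundary terms intervene). The paper sidesteps this by mollifying $\lambda$ on $B_1$ and reading off $\nabla|\lambda_\delta|\in\ker M_0$ pointwise, which gives the $(\ker M_0)^\perp$-invariance directly and passes to the weak-$*$ limit; you can either quote that step or, if you prefer Fourier, first pass to an unrestricted tangent measure on $\R^d$ and check it is tempered.
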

\begin{proof}
In this case $\dim(\{\Pbb_{\Div}[M] \equiv 0\}^\perp) = \rank M  \ge \ell$. Then, by~\eqref{eq:final} and Lemma~\ref{thm:dim}, one gets the desired bound $\dim_{\Hcal}(|\boldsymbol\mu|) \ge \ell$.
\end{proof}

\subsection{Measure differential forms} Let $m \in \{0,\dots,d-1\}$ and let $\omega \in \M(\Omega;\bigwedge^m \R^d)$ be a \emph{measure $m$-form}. The \emph{exterior derivative} of $\omega$ 
is the $(m+1)$-form distribution 
\[
	\mathrm d\omega \coloneqq \sum_{\substack{i = 1,\dots,n\\1 \le i_1 < \dots < i_m \le n}} \partial_i \omega_{i_1\cdots i_m} [\dd x_i \wedge \dd x_{i_1} \wedge \cdots \wedge  \dd x_{i_m}],
	\]
where the $\omega_{i_1\cdots i_m} = \dpr{\omega,\dd x_{i_1} \wedge \cdots \wedge  \dd x_{i_m}} \in \M(\Omega)$ are the coefficients of $\omega$. The exterior derivative defines a first-order operator of the form~\eqref{op} with $V = \bigwedge^m \R^d$ and $F = \bigwedge^{m+1} \R^d$, and a principal symbol $\mathbbm d(\xi) : \bigwedge^m \R^d \to \bigwedge^{m+1} \R^d$ acting on $m$-co-vectors as
  	\[
  		\mathbbm d(\xi)[v^*] = \xi^* \wedge v^*.
  	\]
Here, $w^*\in \bigwedge^m \R^d$ is the image of $w \in \bigwedge_m \R^d$ under the canonical isomorphism.
By Lemma~\ref{lem:A} in the Appendix, we get $\{\mathbbm d[v^*] \equiv 0\} = \mathrm{Ann^1}(v^*)$ (see~\eqref{eq:ann^1} in the Appendix) and therefore $\ell_{\mathbbm d} = d - m$.
\begin{corollary}\label{cor:1}Let $\omega \in \M(\Omega;\bigwedge^m)$ be a measure $m$-form satisfying $\mathrm d\omega = \eta$ for some  $\eta \in \M(\Omega;\bigwedge^{m+1}\R^d)$. Then, $\omega$ satisfies the dimensional estimate
	\[
	\dim_{\Hcal}(|\omega|) \ge d - m.
	\]
\end{corollary}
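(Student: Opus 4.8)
\textbf{Proof proposal for Corollary~\ref{cor:1}.}
The plan is to apply Theorem~\ref{thm:2} directly to the first-order operator $P(D) = \mathrm d$ acting on measure $m$-forms, after reducing the inhomogeneous equation $\mathrm d\omega = \eta$ to a homogeneous one via the trick already recorded in the remark following Corollary~\ref{cor}. So the first step is to set $\tilde\omega = (\omega,\eta)$, $\tilde E = \bigwedge^m\R^d \times \bigwedge^{m+1}\R^d$, and let $\tilde P(D)\tilde\omega = \mathrm d\omega - \eta$; this is a first-order constant-coefficient operator of the form~\eqref{op} on $\tilde E$, and $\tilde P(D)\tilde\omega = 0$ in the sense of distributions. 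Since $|\omega| \le |\tilde\omega|$ as measures, any Hausdorff-dimension lower bound for $|\tilde\omega|$ transfers to $|\omega|$, so it suffices to bound $\dim_\Hcal(|\tilde\omega|)$ from below.

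The second step is to identify the principal symbol of $\tilde P(D)$ and compute $\ell_{\tilde\Pbb}$. Since $\eta$ enters $\tilde P(D)$ only through the zeroth-order term, the principal symbol of $\tilde P(D)$ at $\xi$ is $(v^*,w^*) \mapsto \xi^*\wedge v^*$, i.e.\ it factors through the projection onto the $\bigwedge^m\R^d$ component and coincides there with $\mathbbm d(\xi)$. Hence for $e = (v^*,w^*) \in \tilde E \setminus\{0\}$ we have $\{\tilde\Pbb[e]\equiv 0\} = \{\mathbbm d[v^*]\equiv 0\}$ if $v^*\neq 0$, and all of $\R^d$ if $v^* = 0$; the latter contributes $\dim(\{0\}^\perp\cap\dots)$—more carefully, when $v^*=0$ the set $\{\tilde\Pbb[e]\equiv 0\}$ is all of $\R^d$, so its orthogonal complement is $\{0\}$, which has dimension $0$ and would spoil the bound. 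This is the one genuine subtlety, and it is handled exactly as in the proof of Theorem~\ref{thm:2}'s applications: by Riesz representation, $\frac{\dd\tilde\omega}{\dd|\tilde\omega|}(x)\neq 0$ for $|\tilde\omega|$-a.e.\ $x$, but we need the stronger fact that the $\omega$-component of the polar is nonzero $|\omega|$-a.e., which is immediate because $\frac{\dd\omega}{\dd|\omega|}(x)\neq 0$ for $|\omega|$-a.e.\ $x$ and $|\omega|$ is precisely the part of $|\tilde\omega|$ we care about. So one should instead argue directly on $\omega$: restrict attention to the measure $|\omega|$ and its polar $v^*(x) = \frac{\dd\omega}{\dd|\omega|}(x)$, which is a nonzero $m$-covector for $|\omega|$-a.e.\ $x$, and invoke the invariance criterion of Lemma~\ref{thm:dim} in the form used to prove Theorem~\ref{thm:2}: every $\nu \in \Tan(|\omega|,x)$ is invariant along $\{\tilde\Pbb[(v^*(x),\eta\text{-polar})]\equiv 0\} = \{\mathbbm d[v^*(x)]\equiv 0\}$.

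The third step is purely algebraic and is already supplied by the excerpt: Lemma~\ref{lem:A} in the Appendix gives $\{\mathbbm d[v^*]\equiv 0\} = \{\xi : \xi^*\wedge v^* = 0\} = \mathrm{Ann}^1(v^*)$, and for a nonzero $m$-covector $v^*$ this annihilator has dimension $m$, so its orthogonal complement has dimension $d-m$. Therefore $\dim(\{\mathbbm d[v^*(x)]\equiv 0\}^\perp) = d-m$ for $|\omega|$-a.e.\ $x$, and Lemma~\ref{thm:dim} yields $\dim_\Hcal(|\omega|)\ge d-m$. Equivalently, once the $v^*=0$ locus is excluded one reads off $\ell_{\mathbbm d} = d-m$ from~\eqref{eq:11} and quotes Theorem~\ref{thm:2}.

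I expect the main obstacle to be purely expository rather than mathematical: one must be careful that the inhomogeneous-to-homogeneous reduction does not artificially drop the effective dimension to $0$ on the set where the $\omega$-component of the polar vanishes, which is exactly the set of $|\omega|$-measure zero that we are free to ignore; making this precise (that we only ever need the invariance statement at $|\omega|$-a.e.\ point, where $v^*(x)\neq 0$) is the only step requiring a moment's care. Everything else—the form of $\mathbbm d(\xi)$, the identification of its kernel via $\mathrm{Ann}^1$, and the final application of the invariance criterion—is either stated in the excerpt or is a direct citation of Theorem~\ref{thm:2}.
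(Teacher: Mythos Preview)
Your proposal is correct and follows the same route as the paper: identify the principal symbol $\mathbbm d(\xi)[v^*] = \xi^* \wedge v^*$, invoke Lemma~\ref{lem:A} to control the annihilator, and conclude via Theorem~\ref{thm:2} together with the inhomogeneous-to-homogeneous remark. Your worry about the $v^* = 0$ locus in the augmented operator is a legitimate point that the paper leaves implicit; your resolution---working at $|\omega|$-a.e.\ points, where the $\omega$-component of the polar is nonzero, and feeding the pointwise invariance statement~\eqref{eq:final} into Lemma~\ref{thm:dim}---is exactly the right fix.

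One small correction: Lemma~\ref{lem:A} only asserts $\dim \mathrm{Ann}^1(v^*) \le m$ for a general nonzero $m$-covector (equality holds for \emph{simple} covectors), so you should write $\dim(\{\mathbbm d[v^*(x)]\equiv 0\}^\perp) \ge d-m$ rather than $= d-m$. This inequality is all that Lemma~\ref{thm:dim} requires, so the argument is unaffected.
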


\subsection{Normal currents}
Let $1 \le m \le d$ be an integer. The space of $m$-\emph{currents} consists of all distributions $T \in \Dcal'(\Omega;\bigwedge_m\R^d)$. In duality with the space of smooth differential forms and the exterior derivative, one defines the \emph{boundary} of a current $T$ as the $(m-1)$-current acting on $\Crm^\infty_c(\Omega;\bigwedge^{m-1}\R^d)$ as $\partial T [\omega] = T(\mathrm d\omega)$. 
The space $\Nbf_m(\Omega)$ of \emph{$m$-dimensional normal currents} is defined as the space of $m$-currents $T$, such that both $T$ and $\partial T$ can be represented by a measure, that is,
\[
		\textstyle{	\Nbf_m(\Omega) \cong \setb{T \in \M(\Omega;\bigwedge_m\R^d))}{\partial T \in \M(\Omega;\bigwedge_{m-1}\R^d)} }.
\]  
The total variation of a normal current $T$ is denoted by $\|T\|$; and we write $T = \vec T \,\|T\|$ to denote its polar decomposition.
The boundary operator on $\Nbf_m(\Omega)$ defines a first-order operator of the form~\eqref{op}, with a principal symbol $\mathbbm d^*(\xi) : \bigwedge_m\R^d \to \bigwedge_{m-1}\R^d$ acting on $m$-vectors as the interior multiplication
\[
	\mathbbm d^*(\xi)[v] = v \llcorner \xi^* \quad \text{where} \,\; \dpr{v \llcorner \xi^*,z^*} = \dpr{v,\xi^* \wedge z^*}.
\]
Using the notation contained in the appendix, we readily check that $\{\mathbbm d^*[v] \equiv 0\} = \mathrm{Ann_1}(v)$. By means of Lemma~\ref{lem:B} and definition~\eqref{eq:11}, we conclude $\ell_{\mathbbm d^*} = m$. Theorem~\ref{thm:2} gives an alternative proof of the known dimensional estimates for normal currents: 
\begin{corollary}
	Let $T = \vec{T}\|T\| \in \Nbf_m(\Omega)$ be an $m$-dimensional normal current on $\Omega$. Then, $\|T\|$ satisfies the dimensional estimate
		\[
			\dim_{\Hcal}(\|T\|) \ge m.
		\] 
\end{corollary}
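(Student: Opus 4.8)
The plan is to obtain the bound exactly as in the proof of Corollary~\ref{prop2}, i.e.\ from the pointwise refinement behind Theorem~\ref{thm:2} (the combination of~\eqref{eq:final} and Lemma~\ref{thm:dim}) applied to the boundary operator on $\Nbf_m(\Omega)$, rather than from Theorem~\ref{thm:2} as a black box. First I would unwind the definitions: $T\in\Nbf_m(\Omega)$ means that $T$ is represented by a measure in $\M(\Omega;\bigwedge_m\R^d)$ and that $\tau\coloneqq\partial T$ is represented by a measure in $\M(\Omega;\bigwedge_{m-1}\R^d)$. Since $\partial T=\sum_{i=1}^d\mathbbm d^*(e_i)[\partial_i T]$ and there is no zeroth-order term, the pair $(T,\tau)$ solves the \emph{homogeneous} first-order constraint $\tilde P(D)(T,\tau)\coloneqq\partial T-\tau=0$ of the form~\eqref{op} on $\tilde E=\bigwedge_m\R^d\times\bigwedge_{m-1}\R^d$, whose principal symbol is $\tilde\Pbb(\xi)[(v,w)]=\mathbbm d^*(\xi)[v]=v\llcorner\xi^*$ (the zeroth-order term $\tau$ does not enter the symbol). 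This is the inhomogeneous-to-homogeneous reduction recorded in the Remark following Corollary~\ref{cor}.

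Next I would localize the estimate on $\|T\|$. One cannot feed $(T,\tau)$ directly into Theorem~\ref{thm:2}: along the directions $(v,w)$ with $v=0$ the set $\{\tilde\Pbb[(v,w)]\equiv0\}$ is all of $\R^d$, so $\ell_{\tilde\Pbb}=0$; but such directions are precisely the ones carried by the part of $\tau$ that is singular with respect to $\|T\|$, and hence irrelevant for $\dim_\Hcal(\|T\|)$. To make this precise I would use that, by Riesz' representation theorem, $\frac{\dd T}{\dd\|T\|}(x)=\vec T(x)$ has unit norm for $\|T\|$-a.e.\ $x$; consequently $\|T\|\ll|(T,\tau)|$ with strictly positive density $\|T\|$-a.e., and the $\bigwedge_m\R^d$-component of $\frac{\dd(T,\tau)}{\dd|(T,\tau)|}(x)$ equals $\vec T(x)\,\frac{\dd\|T\|}{\dd|(T,\tau)|}(x)\neq0$ for $\|T\|$-a.e.\ $x$. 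Therefore, for $\|T\|$-a.e.\ $x$,
\[
\Bigl\{\tilde\Pbb\Bigl[\tfrac{\dd(T,\tau)}{\dd|(T,\tau)|}(x)\Bigr]\equiv0\Bigr\}=\{\mathbbm d^*[\vec T(x)]\equiv0\}=\mathrm{Ann_1}(\vec T(x)),
\]
and by Lemma~\ref{lem:B} its orthogonal complement has dimension at least $m$ (with equality exactly for simple $m$-vectors, which is how $\ell_{\mathbbm d^*}=m$ was obtained above). Plugging this pointwise lower bound into~\eqref{eq:final} and Lemma~\ref{thm:dim}, exactly as in the proof of Corollary~\ref{prop2}, yields $\dim_\Hcal(\|T\|)\ge m$.

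I expect the only genuinely non-formal ingredient to be the multilinear-algebra identity $\dim\bigl(\mathrm{Ann_1}(v)^\perp\bigr)\ge m$ for every nonzero $v\in\bigwedge_m\R^d$, i.e.\ Lemma~\ref{lem:B} in the Appendix; this is where the value $m$ really comes from. The remaining steps — the reduction of the inhomogeneous constraint, the fact that $\vec T\neq0$ holds $\|T\|$-a.e., and the passage from a pointwise symbol-dimension bound to a Hausdorff-dimension bound via~\eqref{eq:final} and Lemma~\ref{thm:dim} — are all either recorded earlier in the paper or immediate, so the proof proper is essentially the one-line argument already used for Corollary~\ref{prop2}.
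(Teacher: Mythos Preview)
Your proof is correct and amounts to the same argument the paper intends: identify $\{\mathbbm d^*[v]\equiv 0\}=\mathrm{Ann}_1(v)$, invoke Lemma~\ref{lem:B} to get the dimension bound $\ge m$, and conclude via the invariance criterion. The paper phrases this as ``$\ell_{\mathbbm d^*}=m$, then Theorem~\ref{thm:2}'' (with the Remark on inhomogeneous equations implicit), whereas you unpack that Remark explicitly by passing to $(T,\tau)$ and using the pointwise version~\eqref{eq:final} together with Lemma~\ref{thm:dim}; you are right that a literal black-box application of Theorem~\ref{thm:2} to $(T,\tau)$ would only give $\ell_{\tilde\Pbb}=0$, so the pointwise route (restricting to the $\|T\|$-full-measure set where the $\bigwedge_m$-component of the polar is nonzero) is what actually does the work, exactly as in Corollary~\ref{prop2}.
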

Moreover, by the natural association between fields with bounded divergence and one-dimensional normal currents, Corollary~\ref{cor} and Proposition~\ref{prop2} yield a simple proof of the following soft version of~\cite[Corollary~1.12]{de-philippis2016on-the-structur}: 

\begin{corollary}\label{cor:3} Let $T_1 = \vec T_1\|T_1\|, \dots, T_d = \vec T_d\|T_d\| \in \Nbf_1(\Omega)$ be one-dimensional normal currents and assume there exists a positive Radon measure $\sigma \in \M(\Omega)$ satisfying the following properties:
\begin{enumerate}
	\item[\textnormal{(i)}] $\sigma \ll \|T_i\|$ for all $i = 1,\dots,d$,
	\item[\textnormal{(ii)}] $\spn\big\{\vec{T_1}(x),\dots,\vec{T_d}(x)\big\} = \R^d$ for $\sigma$-almost every $x \in \R^d$.
\end{enumerate}
 Then, $\sigma \ll \Hcal^\kappa$ for all $0 \le \kappa < d$.
\end{corollary}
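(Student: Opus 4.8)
The plan is to reduce the statement to the localized estimate of Corollary~\ref{cor} by packaging the $d$ currents into a single matrix-valued measure of bounded divergence. First I would pass from currents to vector measures: under the canonical isomorphism $\bigwedge_1\R^d\cong\R^d$ each $T_j$ corresponds to some $\boldsymbol\mu_j\in\M(\Omega;\R^d)$ whose distributional divergence lies in $\M(\Omega)$ and equals (up to a sign) the boundary $\partial T_j$, and with $\|T_j\|=|\boldsymbol\mu_j|$, $\vec{T_j}=\frac{\dd\boldsymbol\mu_j}{\dd|\boldsymbol\mu_j|}$. Stacking the $\boldsymbol\mu_j$ as the coordinate blocks of a single $\boldsymbol\mu\in\M(\Omega;\R^d\otimes\R^d)$ yields $\Div\boldsymbol\mu=(\Div\boldsymbol\mu_j)_{j=1}^d\in\M(\Omega;\R^d)$, so $\boldsymbol\mu$ solves a (possibly inhomogeneous) divergence constraint of the form~\eqref{op}, and Corollary~\ref{cor} applies to it through the inhomogeneous reduction described in the Remark following that corollary. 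Note also that, $\boldsymbol\mu_j$ being a coordinate block of $\boldsymbol\mu$, one has $|\boldsymbol\mu_j|\le|\boldsymbol\mu|$ as measures, so hypothesis~(i) already gives $\sigma\ll|\boldsymbol\mu_j|\ll|\boldsymbol\mu|$ for each $j$; in particular $\sigma\ll|\boldsymbol\mu|$.

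The heart of the argument is to check that $\frac{\dd\boldsymbol\mu}{\dd|\boldsymbol\mu|}(x)$ has full rank $d$ for $\sigma$-a.e.\ $x$. Writing the polar decomposition $\boldsymbol\mu=\frac{\dd\boldsymbol\mu}{\dd|\boldsymbol\mu|}\,|\boldsymbol\mu|$, the $j$-th slot of the polar is $\frac{\dd\boldsymbol\mu_j}{\dd|\boldsymbol\mu|}=\frac{\dd|\boldsymbol\mu_j|}{\dd|\boldsymbol\mu|}\,\vec{T_j}$, a nonnegative multiple of $\vec{T_j}(x)$. By the chain rule for Radon--Nikodym derivatives, $\frac{\dd\sigma}{\dd|\boldsymbol\mu|}=\frac{\dd\sigma}{\dd|\boldsymbol\mu_j|}\,\frac{\dd|\boldsymbol\mu_j|}{\dd|\boldsymbol\mu|}$ holds $|\boldsymbol\mu|$-a.e., and since $\sigma$ gives no mass to the set $\{\frac{\dd\sigma}{\dd|\boldsymbol\mu|}=0\}$, the factor $\frac{\dd|\boldsymbol\mu_j|}{\dd|\boldsymbol\mu|}$ is strictly positive $\sigma$-a.e., for every $j$. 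Hence at $\sigma$-a.e.\ $x$ the slots of $\frac{\dd\boldsymbol\mu}{\dd|\boldsymbol\mu|}(x)$ are positive multiples of $\vec{T_1}(x),\dots,\vec{T_d}(x)$, so
\[
\rank\frac{\dd\boldsymbol\mu}{\dd|\boldsymbol\mu|}(x)=\dim\spn\{\vec{T_1}(x),\dots,\vec{T_d}(x)\}=d
\]
by hypothesis~(ii). Recalling from the divergence subsection (and from the proof of Proposition~\ref{prop2}) that $\dim(\{\Pbb_{\Div}[M]\equiv0\}^\perp)=\rank M$, i.e.\ that $\Lambda_{\Div}=\{M:\rank M<d\}$, this says exactly that $\frac{\dd\boldsymbol\mu}{\dd|\boldsymbol\mu|}(x)\notin\Lambda_{\Div}$ for $\sigma$-a.e.\ $x$.

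Finally I would conclude: set $G\coloneqq\{x\in\Omega:\frac{\dd\boldsymbol\mu}{\dd|\boldsymbol\mu|}(x)\notin\Lambda_{\Div}\}$. The previous step gives $\sigma(\Omega\setminus G)=0$, while Corollary~\ref{cor} gives $|\boldsymbol\mu|\llcorner G\ll\Hcal^\kappa$ for every $0\le\kappa<d$. Since $\sigma\ll|\boldsymbol\mu|$ and $\sigma$ is concentrated on $G$, it follows that $\sigma\ll|\boldsymbol\mu|\llcorner G\ll\Hcal^\kappa$ for all $0\le\kappa<d$, which is the assertion. The main obstacle is the rank computation of the middle step: one must guarantee that stacking the individual fields does not drop the rank of the polar on a $\sigma$-nonnull set, and this is precisely where the mutual absolute continuity hypothesis~(i) is used in full force — and not merely the weaker consequence $\sigma\ll|\boldsymbol\mu|$ — since otherwise the density factors $\frac{\dd|\boldsymbol\mu_j|}{\dd|\boldsymbol\mu|}$ could vanish on part of where $\sigma$ lives. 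Everything else is routine bookkeeping with Radon--Nikodym derivatives together with the already-recorded identification of $\Lambda_{\Div}$.
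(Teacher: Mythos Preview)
Your argument is correct and is precisely the proof the paper has in mind: it says just before the statement that ``by the natural association between fields with bounded divergence and one-dimensional normal currents, Corollary~\ref{cor} and Proposition~\ref{prop2} yield a simple proof,'' and your write-up fills in exactly those details --- stacking the $T_j$ into a matrix field of bounded divergence, using the rank computation from Proposition~\ref{prop2} to identify $\Lambda_{\Div}$ with the non-full-rank matrices, and then invoking Corollary~\ref{cor}. The Radon--Nikodym step showing $\tfrac{\dd|\boldsymbol\mu_j|}{\dd|\boldsymbol\mu|}>0$ $\sigma$-a.e.\ is the right way to exploit hypothesis~(i), and nothing is missing.
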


\section{Preliminaries}\label{prel}
  
Let $E$ be a finite dimensional euclidean space. We denote by $\M(\Omega;E) \cong \Crm_c(\Omega;E)^*$ the space of $E$-valued Radon measures over $\Omega$. 
%The \emph{Hausdorff dimension} of a positive Radon measure $\sigma\in\M(\Omega) \cong \Crm_c(\Omega)^*$ is defined as
%\[
%\dim_\Hcal(\sigma) \coloneqq \sup\setb{0 \le \kappa \le d }{\sigma \ll \Hcal^\kappa}.
%\]
%Here, \enquote{$\sigma \ll \Hcal^\kappa$} denotes that $\sigma(B) = 0$ for all Borel sets $B \subset \Omega$ with zero $\Hcal^\kappa$-measure.
%The purpose of this note is to 1) give a criterion on $\Tan(\mu^s,x)$ to obtain lower bounds on the Hausdorff dimension of $\mu$, and 2) use this criterion to give sharp bounds for vector-valued measures solving a first-order partial differential constraint.
For a vector-valued measure $\mu \in \M(\Omega;E)$, we write the Radon--Nykod\'ym--Lebesgue decomposition of $\mu$ as
\[
\mu = \mu^{ac} \Leb^d \llcorner \Omega + g_\mu |\mu^s|, \qquad |g_\mu|_E = 1,
\]
where $\mu^{ac} \in \Lrm^1(\Omega;E)$, $|\mu^s| \perp \Leb^d \llcorner \Omega$, and $g_\mu \in \Lrm^1(\Omega,|\mu^s|;E)$. 
 %Let $E$ be a finite dimensional espace.
%Let $E$ be an euclidean space. We will often consider measures in $\M(\Omega;E)$ as measures in $\M(\R^d;E)$ through the canonical zero extension. 

The map $T^{r,x}(y) = ({y - x})/{r}$,
%\[
%T^{r,x}(y) = \frac{y - x}{r} \quad x\in \Omega
%\]
which maps the open ball $B_r(x) \subset \R^d$ into the open unit ball $B_1 \subset \R^d$, induces a (isometry) push-forward $T^{r,x}_\# : \M(\R^d;E) \to \M(\R^d;E)$. A (normalized) sequence of the form 
\[
\gamma_j = \frac{1}{|\mu|(B_{r_j}(x))} \, (T^{r_j,x_0}_\# \mu) \llcorner B_1, \qquad r_j \todown 0, \quad j \in \Nbb,
\]
is called a \emph{bounded blow-up} sequence of $\mu$ at $x_0$. 
If $\tau = \wslim \gamma_j$ on $\M(\cl {B_1})$, we say that $\sigma$ is a \emph{probability tangent measure} of $\mu$ at $x_0$, symbolically we denote this by
\[
\tau \in \Tan(\mu,x_0).
\]
%
%
%The set of weak-$*$ limit points on $\cl {B_1}$, of all blow-up sequences of $\mu$ at $x_0$, conforms the cone of \emph{bounded tangent measures} denoted here as $\Tan(\mu,x_0)$. 
%Every probability tangent measure has total mass one, and as such, the set $\Tan(\mu,x_0)$ does not contain the trivial measure.
Observe that $|\tau|(\cl{B_1}) = 1$ and, at a $|\mu|$-Lebesgue point $x_0 \in \Omega$, it holds
\[
\tau \in \Tan(\mu,x_0) \quad \Longleftrightarrow \quad \tau = \frac{\dd \mu}{\dd |\mu|}(x_0) |\tau| \quad \text{and} \quad |\sigma|\in\Tan(|\mu|,x_0).
\]
For this an other facts about $\Tan(\mu,x)$, we refer the interested reader to the monograph~\cite[Sec.~{2.7}]{ambrosio2000functions-of-bo}.

For a finite dimensional euclidean vector space $W$, we write $\mathrm{Gr}(W)$ to denote the \emph{Grassmanian} of all linear subspaces of $W$, and $\Gr(\ell,W)$ to denote the set of $\ell$-dimensional subspaces of $W$; when $W = \R^d$ we shall simply write $\mathrm{Gr}(d)$ and $\mathrm{Gr}(\ell,d)$ respectively. 
For given $V \in \mathrm{Gr}(d)$, a measure $\mu \in \M(\R^d)$ is called \emph{$V$-invariant} if 
\[
\tau_\#\mu = \mu \quad \text{for all translations $\tau : \R^d \to \R^d$ satisfying $\tau(V) = V$}.
\] 
The subspace of $V$-invariant measures is denoted $\M^V(\R^d)$. %In this case, there exists a measure $\tilde \mu \in \M_\loc(V^\perp)$ satisfying
%\[
%\mu(A \times B) = \tilde \mu(A)\times\Leb^{\ell}(B) \quad \text{for all Borel sets $A \subset V^\perp, B \subset V$}.
%\] 
Note that this space is sequentially weak-$*$ closed in $\M(\R^d)$. 

The dimension criterion is contained in the next result:

\begin{lemma}[invariance criterion]\label{thm:dim} Let $0 \le \ell \le d$ be a positive integer and let  $\sigma \in \M(\Omega)$ be a positive measure. Assume that at, $\sigma^s$-almost every $x \in \Omega$, every bounded tangent measure $\tau \in \Tan(\sigma^s,x)$ can be split on $B_1$ as a finite sum 
	\begin{equation}\label{eq:C}
	\tau= (\tau_1 + \dots + \tau_{k})\llcorner {B_1}, \qquad k = k(\sigma) \in \Nbb, \tag{C}
	\end{equation}
	where, for each $1 \le h \le k$, $\tau_h$ is a ${V_h}$-invariant measure for some $V_h \in \Gr(\ell_h,d)$ with $\ell_h \ge \ell$. Then, $\sigma$ satisfies the dimensional estimate 
	\[
	\dim_{\Hcal}(\sigma) \ge \ell. 
	\]  
\end{lemma}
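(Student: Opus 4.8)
The plan is to prove the contrapositive direction directly via a density/covering argument: to show $\sigma \ll \Hcal^\kappa$ for every $\kappa < \ell$, it suffices (by the standard comparison between Hausdorff measures and upper densities) to show that the upper $\kappa$-density
\[
\Theta^{*\kappa}(\sigma,x) \coloneqq \limsup_{r \todown 0} \frac{\sigma(B_r(x))}{r^\kappa}
\]
is finite — in fact zero — at $\sigma^s$-almost every $x$, since the absolutely continuous part $\sigma^{ac}\Leb^d$ is trivially $\ll \Hcal^\kappa$ for $\kappa < d$ and we may assume $\kappa$ close to $\ell \le d$. So the first step is to reduce to the singular part $\sigma^s$ and fix $\kappa \in [\ell-1,\ell)$ arbitrary (or $\kappa<\ell$; if $\kappa\le\ell-1$ there is nothing extra to do since density bounds only get easier), and recall the elementary fact that $\Theta^{*\kappa}(\sigma,x) = 0$ for $\sigma$-a.e.\ $x$ implies $\sigma \ll \Hcal^\kappa$.

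The heart of the argument is a blow-up contradiction. Suppose, for contradiction, that $\Theta^{*\kappa}(\sigma^s,x_0) > 0$ on a set of positive $|\sigma^s|$-measure; by a further standard reduction (e.g.\ restricting to a point of positive lower density and using that tangent measures at $|\sigma^s|$-a.e.\ point exist and are nonzero), we may pick $x_0$ such that (a) hypothesis~\eqref{eq:C} holds at $x_0$, (b) $\Theta^{*\kappa}(\sigma^s,x_0)>0$, and (c) $x_0$ is a point where blow-ups converge to a nontrivial tangent measure $\tau \in \Tan(\sigma^s,x_0)$. The positivity of the upper $\kappa$-density along a suitable subsequence $r_j\todown 0$ forces the corresponding normalized blow-ups $\gamma_j$ to satisfy a lower mass bound of the form $\gamma_j(B_\rho) \gtrsim \rho^\kappa$ for all $\rho \in (0,1)$ and $j$ large — this is the place where the precise normalization by $|\sigma^s|(B_{r_j}(x_0))$ and the choice of $r_j$ realizing the limsup must be matched carefully. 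Passing to a weak-$*$ limit yields $\tau \in \Tan(\sigma^s,x_0)$ with $\tau(B_\rho) \gtrsim \rho^\kappa$ for all $\rho \in (0,1]$; equivalently $\Theta^{*\kappa}(\tau,0) > 0$.

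Now invoke the structural hypothesis: $\tau = (\tau_1 + \dots + \tau_k)\llcorner B_1$ with each $\tau_h$ being $V_h$-invariant, $\dim V_h = \ell_h \ge \ell$. The key elementary lemma to establish here is that a nonzero $V$-invariant positive measure $\rho$ on $\R^d$ with $\dim V = m$ has a \emph{pointwise} $m$-dimensional density bounded \emph{above}: $\rho(B_\rho) \le C \rho^{m}$ as $\rho \todown 0$ at every point, hence $\Theta^{*\kappa}(\rho,y) = 0$ whenever $\kappa < m$. This follows by disintegrating $\rho$ along $V$ (translation invariance in the $V$-directions forces $\rho = \Hcal^m\llcorner(V+c) \otimes \nu$ in the appropriate sense, or more precisely $\rho$ is a superposition of translates of $\Hcal^m\llcorner V$) and computing the measure of a ball by Fubini: a ball of radius $\rho$ meets each affine translate of $V$ in a set of $\Hcal^m$-measure $\le \omega_m\rho^m$, and the transverse "spread" of the ball is $\le 2\rho$, giving $\rho(B_\rho(y)) \le \omega_m \rho^m \cdot \nu(\text{transverse ball of radius }\rho) \le C\rho^m$ provided the transverse marginal $\nu$ is locally finite — which it is, since $\rho$ is a Radon measure. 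Summing over $h = 1,\dots,k$ and using $\ell_h \ge \ell > \kappa$ gives $\Theta^{*\kappa}(\tau,0) \le \sum_h \Theta^{*\kappa}(\tau_h,0) = 0$, contradicting $\Theta^{*\kappa}(\tau,0)>0$. This contradiction shows $\Theta^{*\kappa}(\sigma^s,x)=0$ for $|\sigma^s|$-a.e.\ $x$, hence $\sigma^s \ll \Hcal^\kappa$; combined with $\sigma^{ac}\Leb^d \ll \Hcal^d \le \Hcal^\kappa$ for $\kappa < d$ (and handling $\kappa$ arbitrarily close to $\ell\le d$), we conclude $\dim_\Hcal(\sigma)\ge \ell$.

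The main obstacle I anticipate is the bookkeeping in the blow-up step: ensuring that positivity of the \emph{upper} $\kappa$-density of $\sigma^s$ at $x_0$ genuinely transfers to a \emph{uniform-in-$\rho$} lower bound $\tau(B_\rho)\gtrsim\rho^\kappa$ on the limit, rather than merely $\tau\ne 0$. This requires choosing the blow-up radii $r_j$ along the sequence realizing $\limsup_{r\to0}\sigma^s(B_r(x_0))/r^\kappa > 0$ and then, for fixed $\rho\in(0,1)$, comparing $\sigma^s(B_{\rho r_j}(x_0))$ to $(\rho r_j)^\kappa$ — which needs the limsup to hold "simultaneously" at the scales $\rho r_j$; the clean way around this is to instead argue that positive upper $\kappa$-density at $x_0$ forces $\Theta^{*\kappa}(\sigma^s,x_0)\in(0,\infty]$, pick a tangent measure at $x_0$ obtained from radii where the ratio is comparable to the limsup, and use lower semicontinuity of $\rho\mapsto\liminf_j\gamma_j(B_\rho)$ together with the scaling $\gamma_j(B_\rho) = \sigma^s(B_{\rho r_j}(x_0))/\sigma^s(B_{r_j}(x_0))$ and the elementary inequality $\sigma^s(B_{\rho r_j})\ge$ (something $\gtrsim \rho^\kappa \sigma^s(B_{r_j})$ for the \emph{specific} radii, using a Vitali-type or doubling-along-a-subsequence trick). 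A secondary but routine point is verifying the upper-density bound for $V$-invariant measures with full rigor — the disintegration $\rho = \int \Hcal^m\llcorner(V+c)\,d\nu(c)$ is standard but should be cited or proved in a line.
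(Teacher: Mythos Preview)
Your overall strategy --- show $\Theta^{*\kappa}(\sigma^s,x)=0$ at $\sigma^s$-a.e.\ $x$ for every $\kappa<\ell$ by a blow-up contradiction, using that $V$-invariant measures satisfy $\tau_h(B_t)\lesssim t^{\ell_h}$ --- is exactly the paper's approach, and your Fubini/disintegration argument for the invariance bound is essentially the paper's computation.

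The one place you are working harder than necessary is the step you flag as the ``main obstacle'': you try to produce a tangent measure $\tau$ with $\tau(B_\rho)\gtrsim\rho^\kappa$ for \emph{all} $\rho\in(0,1]$, and this uniform-in-$\rho$ transfer is indeed delicate. The paper sidesteps it with two short observations. First, it suffices to show $\Theta^{*\kappa}(\sigma^s,x)<\infty$ for every $\kappa<\ell$, because if $\Theta^{*\kappa_1}(\sigma^s,x)>0$ for some $\kappa_1<\ell$ then $\sigma^s(B_r(x))/r^{\kappa}=\bigl(\sigma^s(B_r(x))/r^{\kappa_1}\bigr)\cdot r^{\kappa_1-\kappa}\to\infty$ along a subsequence for any $\kappa\in(\kappa_1,\ell)$, so $\Theta^{*\kappa}=\infty$. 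Second, once you assume $\Theta^{*\kappa}(\sigma^s,x)=\infty$, a standard lemma (Proposition~2.42 in Ambrosio--Fusco--Pallara) hands you, for any \emph{single} fixed $t\in(0,1)$, a tangent measure $\tau\in\Tan(\sigma^s,x)$ with $t^\kappa\le\tau(\overline{B_t})\le\tau(B_1)\le 1$. Combining this with your invariance bound $\tau(\overline{B_t})\le\sum_h\tau_h(\overline{B_t})\le C t^{\ell}$ gives $t^\kappa\le Ct^\ell$, a contradiction for $t$ small. So you only need the lower bound at one scale, not all scales, and no Vitali or doubling trick is required.
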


\section{Proofs}\label{sec:proofs}

We begin by proving an estimate on the upper Hausdorff densities. %This type of estimate has been proved before (see~~\cite{BF}) in the context of \emph{tangent spaces} $T_\mu(x) \subset \R^d$ of a measure $\mu$; introduced by \textsc{Bouchitt\'e}, \textsc{Buttazzo}, and \textsc{Seppecher}~[XXX]:
 
\begin{lemma} Let $0 \le \ell \le d$ be a positive integer and let  $\sigma \in \M_\loc(\Omega)$ be a positive measure. Let $x \in \Omega$ be a $|\mu^s|$-Lebesgue point and assume that every bounded tangent measure $\tau \in \Tan(\mu^s,x)$ can be split on $B_1$ as a finite sum
	\[
	\tau = \tau_1 + \dots + \tau_{k}, \qquad k = k(\sigma) \in \Nbb,
	\]
	where each $\tau_h$ is a $V_h$-invariant measure for some $V_h \in \Gr(\ell_h,d)$ with $\ell \le \ell_h$.
	
	Then, the upper $\kappa$-density of $\mu$ at $x$ is equal to zero for all $\kappa \in [0,\ell)$, that is, 
	\[
	\theta^{*\kappa}(\mu^s,x) \coloneqq \limsup_{r \todown 0}\frac{\mu(Q_r(x))}{r^{\kappa}} = 0 \quad \forall \, \kappa \in [0,\ell).
	\]
\end{lemma}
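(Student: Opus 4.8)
The plan is to show that a measure whose tangent measures (at $|\mu^s|$-a.e.\ point) split into sums of highly-invariant pieces cannot concentrate too fast on small cubes. First I would argue by contradiction: suppose $\theta^{*\kappa}(\mu^s,x) > 0$ for some $\kappa < \ell$ on a set of positive $|\mu^s|$-measure. The first step is a \emph{reduction to a single blow-up}. Because $x$ is a $|\mu^s|$-Lebesgue point, along a suitable sequence $r_j \todown 0$ the normalized blow-ups $\gamma_j = |\mu|(B_{r_j}(x))^{-1}(T^{r_j,x}_\#\mu)\llcorner B_1$ converge weak-$*$ to some $\tau \in \Tan(\mu^s,x)$. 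If the upper $\kappa$-density is strictly positive, I can choose the $r_j$ so that the mass ratios $\mu(Q_{r_j}(x))/r_j^\kappa$ stay bounded below; comparing the normalizing factors $|\mu|(B_{r_j}(x))$ (which I may take along the same sequence, possibly after a comparable-cube-vs-ball adjustment) gives that the limit measure $\tau$ has a lower bound on its cube-mass at scale $1$, i.e.\ $\tau(Q_c) \ge c\, \rho^\kappa$ for a fixed small $\rho$ and all $c$ in a range — more precisely, after rescaling the limit, I get a nonzero tangent measure $\tau$ with $\theta^{*\kappa}(\tau,0) > 0$ as well (tangent measures of tangent measures, plus the homogeneity of the density condition).

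**Main step: the invariance kills the density.** The heart of the matter is that each $\tau_h$, being $V_h$-invariant with $\dim V_h = \ell_h \ge \ell > \kappa$, has $\kappa$-density zero at the origin, and finite sums of such measures inherit this. The key computation: if $\nu$ is $V$-invariant with $\dim V = m$, then for a cube $Q_r$ centered at $0$, $\nu(Q_r)$ is comparable to $r^m$ times the mass of the "transverse slice" — i.e.\ writing $Q_r = Q_r' \times Q_r''$ with $Q_r' \subset V$ and $Q_r'' \subset V^\perp$, invariance in the $V$-directions forces $\nu$ to be (locally) a product of $\Leb^m\llcorner V$ with some measure on $V^\perp$, hence $\nu(Q_r) \lesssim r^m \cdot \bigl(\text{transverse mass near }0\bigr)$. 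Therefore $\nu(Q_r)/r^\kappa \lesssim r^{m-\kappa}\cdot(\text{bounded}) \to 0$ since $m \ge \ell > \kappa$. Summing over $h = 1,\dots,k$ gives $\theta^{*\kappa}(\tau,0) = 0$, contradicting the previous step. I would be slightly careful that "locally a product" is the right statement: $V$-invariance as defined means $\tau_\#\nu = \nu$ for all translations preserving $V$ (i.e.\ translations \emph{in} directions of $V$), which is exactly translation-invariance in the $V$-directions, and Fubini then identifies $\nu\llcorner(V\times K)$ with $\Leb^m\llcorner V \otimes \pi_\#(\nu\llcorner(\cdot\times K))$ for bounded $K \subset V^\perp$ — this is standard but worth stating as a one-line lemma or citing.

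**The main obstacle.** The delicate point is the passage from a statement about \emph{upper density along cubes at $x$} to a \emph{property of the limiting tangent measure}, because the normalization in the definition of $\Tan$ uses $|\mu|(B_{r_j}(x))$ in the denominator whereas the density uses $r_j^\kappa$, and a priori these need not be comparable — indeed if $\mu^s$ has infinite upper $\kappa$-density the two scalings diverge. The clean way around this is: (i) the case $\theta^{*\kappa}(\mu^s,x) = +\infty$ can be handled directly because it forces $|\mu|(B_{r_j}(x)) \gg r_j^\kappa$ along the relevant sequence, and the splitting hypothesis on \emph{all} tangent measures (applied after renormalizing by $r_j^\kappa$ instead, producing a tangent measure in a weaker sense, or truncating) still yields the product structure and a contradiction; (ii) in the finite case the two scalings are comparable along the density-achieving sequence, so the limit is a genuine element of $\Tan(\mu^s,x)$ up to a positive multiplicative constant, and the hypothesis applies on the nose. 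I expect writing (i) carefully — ensuring the blow-ups under the $r_j^\kappa$-normalization still have a weak-$*$ convergent subsequence with nonzero, non-infinite limit, or else reducing to (ii) by a stopping-time/good-scale argument — to be the part requiring the most care; everything else is soft functional analysis plus the elementary Fubini computation above.
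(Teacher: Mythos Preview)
Your core computation---that a $V$-invariant measure on $B_1$ factors as $\Leb^{\dim V}\otimes\tilde\tau$ and hence has $\kappa$-density zero at the origin whenever $\kappa<\dim V$---is exactly what the paper uses. The difference lies entirely in the reduction step, and the ``main obstacle'' you identify is real: in your case~(ii), even if $0<\theta^{*\kappa}(\mu^s,x)<\infty$ along a sequence $r_j$, there is no reason the resulting tangent measure $\tau$ satisfies $\theta^{*\kappa}(\tau,0)>0$, because this requires control at \emph{all} small scales $sr_j$ of the original measure, not just along $r_j$. Your case~(i) you yourself flag as incomplete.

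The paper dissolves this obstacle with two short observations that you are missing. First, a monotonicity trick: if $\theta^{*\kappa_1}(\mu^s,x)>0$ for some $\kappa_1$, then $\theta^{*\kappa}(\mu^s,x)=\infty$ for every $\kappa\in(\kappa_1,\ell)$; hence it suffices to show $\theta^{*\kappa}(\mu^s,x)<\infty$ for all $\kappa<\ell$. Second, and this is the key replacement for your delicate passage, the paper invokes \cite[Proposition~2.42]{ambrosio2000functions-of-bo}: if $\theta^{*\kappa}(\mu^s,x)=\infty$, then for any fixed $t\in(0,1)$ there exists a genuine probability tangent measure $\tau\in\Tan(\mu^s,x)$ with
\[
t^\kappa \le \tau(\cl{B_t}) \le \tau(B_1)\le 1.
\]
This gives a lower bound at a \emph{single} scale $t$ of the tangent measure, which is all the product-structure estimate needs: the splitting hypothesis yields $\tau(\cl{B_t})\le 2^d t^\ell$, and choosing $t<4^{-d/(\ell-\kappa)}$ gives the contradiction. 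No iterated tangents, no comparison of normalizations, no stopping-time argument.
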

\begin{proof}
	It suffices to show that $\theta^{*\kappa}(\mu^s,x)$ is finite for all $\kappa \in [0,\ell)$. The fact that $\theta^{*\kappa}(\mu^s,x)$ is equally zero will then follow from the next simple observation: if $\theta^{*\kappa_1}(\mu^s,x) > 0$, then $\theta^{*\kappa}(\mu^s,x) = \infty$ for all $\kappa \in (\kappa_1,\ell)$. 
	
	We argue by contradiction.  Assume that $\theta^{*\kappa}(\mu^s,x) = \infty$ for some $\kappa \in [0,\ell)$ and let $t \in (0,4^{-\frac{d}{\ell-\kappa}})$. Then, by~\cite[Proposition~2.42]{ambrosio2000functions-of-bo}, there exists a bounded tangent measure $\tau \in \Tan(\mu^s,x)$ with 
	$t^{\kappa} \le \tau(\cl{B_t}) \le \tau(B_1) \le 1$. On the other hand, by assumption, we may find a positive integer $k = k(\tau)$ such that
	\[
	\tau = \tau_1 + \dots + \tau_{k},
	\]
	where each $\tau_h$ is a positive $V_h$-directional measure, for all $1 \le h \le k$. 	
	Let us denote by $\mathbf p_h : \R^d \to V_h^\perp$ the canonical projection so that 
	\[
	\tau_h(F) \le \Leb^{\ell_h}((\mathbf 1 - \mathbf p_h) F) \cdot \tilde \tau_h(\mathbf p_h F) \quad F \subset B_1,
	\]
	where up to a linear isometry transformation we have $\tau_h = \Leb^{\ell_h} \otimes \tilde \tau_h$. 
	
	Next, we use that $4t < 1$ and that $\ell \le \ell_h$ (for all $1 \le h \le k$) to obtain the estimate
	\begin{align*}
	t^\kappa \le \tau(\cl{B_t}) & = \tau_1(\cl{B_t}) + \dots + \tau_{k}(\cl{B_t}) \\
	&  \le (2t)^{\ell_1}\tilde\tau_1(\mathbf p_1{B_\frac{1}{4}}) + \dots + (2t)^{\ell_{k}}\tilde\tau_{k}(\mathbf p_{k} {B_\frac{1}{4}})\\
		&  \le (2t)^{\ell_1}2^{-(d-\ell_1)}\tau_1(B_1) + \dots + (2t)^{\ell_{k}}2^{-(d-\ell_k)}\tau_k(B_1)\\
	&  \le 2^dt^\ell \tau(B_1) \le 2^d t^\ell.
	%& \le  (2t)^\ell \sigma(B_1) \le 2^\ell t^\ell.
	\end{align*}
	This chain of inequalities implies $2^{-\frac{d}{\ell - \kappa}} \le t$, which directly contradicts our choice of $t$.  
	This shows $\theta^{*\kappa}(\mu,x) < \infty$, as desired. 
\end{proof}

\begin{proof}[Proof of Lemma~\ref{thm:dim}]
	Fix an arbitrary $\kappa \in [0,\ell)$. By the previous lemma and the assumption we know that the set $\Theta^\kappa_0 \coloneqq \set{x \in \Omega}{\theta^{*\kappa}(\sigma,x) = 0}$ has full $|\sigma^s|$-measure on $\Omega$. Hence, $\sigma^s \llcorner \Theta^\kappa_0 = \sigma^s$. Moreover, for every $\eps > 0$, it holds $\theta^{*\kappa}(\sigma^s,x) \le \eps$ for all $x \in \Theta^\kappa_0$. Then, the upper-density criterion contained in~\cite[Theorem~2.56]{ambrosio2000functions-of-bo} holds and therefore
	\[
	\sigma^s \llcorner \Theta^\kappa_0 \le 2^\kappa \eps\, \Hcal^\kappa \llcorner \Theta^\kappa_0 \quad \text{for all $\eps > 0$}.
	\]  
	Letting $\eps \todown 0$ we deduce that $\sigma^s(F) = 0$ whenever $\Hcal^\kappa(F \cap \Theta^\kappa_0) < \infty$ for a Borel set $F \subset \Omega$. By the definition of Hausdorff dimension,  this implies $\dim_\Hcal(\sigma^s) \ge \kappa$. Since $\kappa \in [0,\ell)$ was chosen arbitrarily and $\dim_{\Hcal}(\sigma) = \dim(\sigma^s)$, we conclude that $\dim_\Hcal(\sigma) \ge \ell$.
\end{proof}

\begin{proof}[Proof of Theorem~\ref{thm:2}]
	Let $x \in \Omega$ be a $|\mu|^s$-Lebesgue point so that every probability tangent measure $\sigma \in \Tan(\mu^s,x)$ can be written as $\sigma = e|\sigma|$ with $e = \frac{\dd \mu}{\dd |\mu|}(x) \in E$ and $|\sigma| \in \Tan(|\mu|^s,x)$. 
	
	Fix $\sigma \in \Tan(\mu^s,x)$. Note that $P^1(D)\sigma = 0$ in the sense of distributions on $B_1$, where $P^1(D)$ is the principal part of $P(D)$. This follows from the scaling rule
	\[
P^1(D) [T^{r_j,x}_\# \mu] = - r_j \cdot P_0[T^{r_j,x}_\# \mu],
	\]
where the term in the right-hand side converges strongly to zero (in the sense of distributions) as $j \to \infty$.
We now use the fact that $B_1$ is a star-shaped domain to define smooth approximations of $\sigma$ on $B_1$ as follows. Fix $\delta > 0$ to be a small parameter and define $\sigma_\delta \coloneqq (T^{1 - \delta,0}_\#\sigma) \star \rho_\delta \in \Crm^\infty(B_1;E)$, where $\rho_\delta$ is a standard mollifier at scale $\delta$. In this way
	$\sigma_\delta \, \Leb^d \llcorner B_1 \toweakstar \sigma$ and $|\sigma_\delta| \, \Leb^d \llcorner B_1 \toweakstar |\sigma|$ as $\delta \todown 0$ on $B_1$. Observe that, for each $\delta > 0$, the measure $\sigma_\delta$ (which satisfies $\sigma_\delta = e|\sigma_\delta|$) solves (in the classical sense) the homogeneous equation
	\[
	P^1(D)\sigma_\delta = \text{$\sum_{i = 1}^n  P_i[e]\, (\partial_i |\sigma_\delta|)= 0$ on $B_1$}.
	\] 
%	\[
%	\sum_{i = 1}^d A_iP [\partial_i |\sigma_\delta|(x) ]= 0 \qquad \text{for all $x \in \Omega$}.
%	\]
	In symbolic language this reads $\Pbb(\nabla |\sigma_\delta|)[e] = 0$, or equivalently, in terms of the differential inclusion, 
	\[
	\nabla (|\sigma_\delta|) \in \{\Pbb[e] \equiv 0\}  \; \text{on $B_1$}. 
	\]
We deduce that $\nabla (|\sigma_\delta|)(x)[\xi] = 0$ for all $\xi \in \{\Pbb[e] \equiv 0\}^\perp$ and all $x \in B_1$. In particular, for every $\delta > 0$, the measure $|\sigma_\delta| \Leb^d \llcorner B_1$ is $\{\Pbb[e] \equiv 0\}^\perp$-invariant. Since the space of $\{\Pbb[e] \equiv 0\}^\perp$-invariant measures is sequentially weak-$*$ closed, we infer  that
	\begin{equation}\label{eq:final}
	\text{$|\sigma| \in \Tan(|\mu|^s,x)$ is a $\{\Pbb[e] \equiv 0\}^\perp$-invariant measure on $B_1$}.
	\end{equation}

	Finally, since $x$ was chosen to be an arbitrary $|\mu|^s$-Lebesgue point, $|\mu|$ satisfies~\eqref{eq:C} with $\ell = \ell_{\Pbb}$. We conclude, by Lemma~\ref{thm:dim}, that $\dim_\Hcal(|\mu|) \ge \ell_{\Pbb}$. 	
\end{proof}

\begin{proof}[Proof of Corollary~\ref{cor}]
	By the very definition of $\Lambda_\Pbb$, it follows that $ \{\Pbb[e] \equiv 0\} = \{0\}$ for all $e \notin \Lambda_\Pbb$. Let us write $S_{\Pbb,\mu} \coloneqq \setb{x \in \Omega}{\frac{\dd \mu}{\dd |\mu|}(x) \notin \Lambda_{\Pbb}}$. From~\eqref{eq:final}, it follows that 
	$|\mu| \llcorner S_{\Pbb,\mu}$ satisfies the assumptions of Lemma~\ref{thm:dim} with $\ell = d$. Therefore $\dim_{\Hcal}(|\mu| \llcorner S_{\Pbb,\mu}) = d$. The sought estimate is then an immediate consequence of the definition of Hausdorff dimension.  
\end{proof}

\appendix

\section{Multilinear algebra}

Let $V$ be a finite dimensional euclidean space. The exterior algebra $\bigwedge^* V$ is a graded algebra with the \enquote{$\wedge$} product. Specifically $\wedge : \bigwedge^p V \times \bigwedge^q V \to \bigwedge^{p+q} V : (\xi^*,\omega^*) \mapsto \xi^* \wedge \omega^*$. 

In the particular situation when $p = 1$, this is the multiplication by $1$-covectors. 
As such, we can define the annihilator of this map on a fixed $m$-vector by setting
\begin{equation}\label{eq:ann^1}
 \textstyle{\mathrm{Ann^1}(v^*) \coloneqq \setb{\xi \in V}{\xi^* \wedge v^* = 0}}.
\end{equation}

\begin{lemma}\label{lem:A} Let $V$ be an euclidean space of dimension $d$, let $m \in \{0,\dots,d\}$ be a positive integer, and let $v^* \in \bigwedge^m V$ be a non-zero $m$-covector. Then 
$$
 \textstyle{\mathrm{Ann^1}(v^*) \in \Gr(\ell,V) \quad \text{for some $0 \le \ell \le m$}}.
$$
Moreover, if $v^*$ is a simple $m$-covector, then $\ell = m$.
\end{lemma}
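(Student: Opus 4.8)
The plan is to reduce the statement to a direct coordinate computation in the exterior algebra, the only subtlety being cosmetic: the symbol $\xi^*$ refers to the image of $\xi \in V$ under the euclidean identification $V \cong V^*$, which is a linear isomorphism and therefore plays no role in the dimension count, so I will work throughout with the graded algebra $\bigwedge^* V$ and the $\wedge$-product as set up in the Appendix. First I would record that $\mathrm{Ann^1}(v^*)$ is a linear subspace of $V$, which is immediate from the bilinearity of $\wedge$ together with the linearity of $\xi \mapsto \xi^*$: if $\xi_1^* \wedge v^* = \xi_2^* \wedge v^* = 0$, then $(\alpha\xi_1 + \beta\xi_2)^* \wedge v^* = \alpha\,\xi_1^* \wedge v^* + \beta\,\xi_2^* \wedge v^* = 0$. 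Write $\ell \coloneqq \dim \mathrm{Ann^1}(v^*)$; what remains is to prove $\ell \le m$, and $\ell = m$ when $v^*$ is simple.

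For the bound $\ell \le m$, I would choose a basis $e_1, \dots, e_\ell$ of $\mathrm{Ann^1}(v^*)$, complete it to a basis $e_1, \dots, e_d$ of $V$, and use the corresponding basis $\{e_I^* : |I| = m\}$ of $\bigwedge^m V$, where $e_I^* \coloneqq e_{i_1}^* \wedge \cdots \wedge e_{i_m}^*$ for $I = \{i_1 < \cdots < i_m\} \subseteq \{1,\dots,d\}$. Expanding $v^* = \sum_{|I| = m} c_I\, e_I^*$, for each fixed $j \in \{1,\dots,\ell\}$ the vanishing $e_j^* \wedge v^* = 0$ reads $\sum_{I : j \notin I} \pm c_I\, e_{\{j\}\cup I}^* = 0$; since the covectors $e_{\{j\}\cup I}^*$ (over distinct $I$ not containing $j$) are distinct basis vectors of $\bigwedge^{m+1} V$, this forces $c_I = 0$ whenever $j \notin I$. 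Letting $j$ range over $1, \dots, \ell$ gives $c_I = 0$ unless $\{1,\dots,\ell\} \subseteq I$. Since $v^* \neq 0$ there is at least one such $I$, so an $m$-element set can contain the $\ell$-element set $\{1,\dots,\ell\}$, i.e. $\ell \le m$; moreover, collecting the surviving terms, $v^* = e_1^* \wedge \cdots \wedge e_\ell^* \wedge w^*$ for some $w^* \in \bigwedge^{m-\ell} V$. (This factorization incidentally yields the converse to the last assertion — if $\ell = m$ then $w^*$ is a nonzero scalar and $v^*$ is simple — though only the stated direction is needed.)

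For the \emph{moreover} clause, assume $v^*$ is simple, say $v^* = f_1 \wedge \cdots \wedge f_m$ with $f_1, \dots, f_m \in V^*$ linearly independent. Writing $f_j = g_j^*$ for the corresponding $g_j \in V$, each $g_j^* \wedge v^* = f_j \wedge f_1 \wedge \cdots \wedge f_m = 0$ on account of the repeated factor, so $g_1, \dots, g_m \in \mathrm{Ann^1}(v^*)$; as the $g_j$ are linearly independent this gives $\ell \ge m$, and combined with the first part, $\ell = m$.

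I do not expect a genuine obstacle: the one place requiring care is the index bookkeeping in the second paragraph — tracking exactly which multi-indices $I$ survive after imposing the $\ell$ relations $e_j^* \wedge v^* = 0$ and then reassembling the factorization — but this is entirely routine linear algebra in $\bigwedge^* V$, with only sign factors ($\pm$) to keep straight, none of which affect the conclusion.
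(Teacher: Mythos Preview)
Your proof is correct and follows essentially the same coordinate approach as the paper: expand $v^*$ in a basis adapted to $\mathrm{Ann}^1(v^*)$, use linear independence of the $(m+1)$-covectors $e_j^* \wedge e_I^*$ to kill the coefficients $c_I$ with $j \notin I$, and read off $\ell \le m$. Your write-up is in fact slightly more streamlined---you complete a basis directly rather than invoking an automorphism, and you conclude $\{1,\dots,\ell\}\subseteq I$ for every surviving $I$ instead of passing through the intersection $\bigcap \mathrm{Ann}^1(e_{i_1}^*\wedge\cdots\wedge e_{i_m}^*)$---and you pick up the factorization $v^* = e_1^*\wedge\cdots\wedge e_\ell^*\wedge w^*$ as a bonus.
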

\begin{proof}
The assertion that $\mathrm{Ann_1}(v)$ is in fact a linear space follows immediately from the bi-linearity of the wedge product. 
Notice also that, on simple vectors $v^* = v_1^* \wedge \cdots \wedge v_m^*$, the result is immediate since then $\mathrm{Ann^1}(v^*) = \spn\{v_1,\dots,v_m\}$ (so that $\ell = m$ in this case). Any automorphism $\phi$ of $V$ lifts to an automorphism $\Phi$ on $\bigwedge^* V$ satisfying
\[
\Phi(v_1^* \wedge \cdots \wedge v_m^*) = \phi(v_1^*) \wedge \cdots \wedge \phi(v_m^*).
\]
Hence, once $v^* \in \bigwedge_m V$ is fixed, we may assume without loss of generality that
$\mathrm{Ann}^1(v^*) = \spn\{e_1,\dots,e_\ell\}$ for some $0 \le \ell \le d$, where $\{e_1,\dots,e_d\}$ is an orthonormal basis of $V$. Indeed, let $\{\xi_1,\dots,\xi_\ell\}$ be a normal basis of $\mathrm{Ann}^1(v)$ and let $\phi$ be the automorphism of $V$ satisfying $\phi(\xi_i^*) = e_i^*$ for all $1 \le i \le \ell$ and $\phi(w^*) = w^*$ for all $w^* \in \mathrm{Ann}^1(v^*)^\perp$.
Then,
\[
\Phi(\xi_i^* \wedge v^*) = e_i^* \wedge \phi(v^*). %= e_i^* \wedge \mathbf p^*v^*,
\]
%where $\mathbf p : V \to \mathrm{Ann}_1(v)^\perp$ is the canonical projection, and $\mathbf p^*$ is the linear map defined on simple vectors $v_1^*\wedge \cdots \wedge v_m^*$ as $\mathbf p v_1^* \wedge \cdots \wedge \mathbf p v_m^*$.

Let us fix $i_0 \in \{1,\dots,\ell\}$ and observe that
\[
e_{i_0}^* \wedge v^* = \sum_{\substack{1 \le i_1 < \dots < i_m \le d\\i_1,\dots,i_m \neq i_0}} v_{i_1 \cdots i_m} (e_{i_0}^* \wedge e_{i_1}^* \wedge \cdots \wedge e^*_{i_m}),
\]
where $v_{i_1 \cdots i_m} = \dpr{v^*,e_{i_1} \wedge \cdots \wedge e_{i_m}}$. On the other hand, the set
\[
\setb{e_{i_0}^* \wedge e_{i_1}^* \wedge \cdots \wedge e_{i_m}^*}{1 \le i_1 < \dots < i_m \le d, i_1, \dots i_m \neq i_0}
\]
conforms a set of linearly independent $m$-covectors in $\bigwedge^{m+1} V$. Therefore, $e_i^* \wedge v^* = 0$ if and only if $e^*_{i_0} \wedge e^*_{i_1} \wedge \cdots \wedge e^*_{i_m} = 0$ for all $1 \le i_1 < \dots < i_m \le d$ such that $v_{i_1 \cdots i_m} \neq 0$. Since $1 \le i_0 \le \ell$ was chosen arbitrarily, this yields the set contention 
\[
\mathrm{Ann^1}(v^*) \subset \bigcap_{\substack{i_1,\dots,i_m \neq i_0\\v_{i_1 \cdots i_m} \neq 0}}\mathrm{Ann^1}(e_{i_1}^*\wedge \cdots \wedge e_{i_m}^*).
\]
By the first observation, on the dimension of annihilators of simple vectors, we conclude that $\dim[\mathrm{Ann^1}(v^*)] = \ell \le m$.
\end{proof}

By duality, the exterior product induces tan interior multiplication on the algebra of vectors $\bigwedge_* V$. This is a bilinear map $\llcorner : \bigwedge_{q} V \times \bigwedge^{p} V \mapsto \bigwedge_{q-p} V : (v,w^*) \mapsto v \llcorner w^* $, where $v \llcorner w$ acts on $(q-p)$-co-vectors $z^*$ as
\[
\dpr{v \llcorner w^* , z^*} = \dpr{v, w^* \wedge z^*}.
%d_\xi : D^{m} \to D^{m+1}: v^* \mapsto \xi^* \wedge v^*
\] 
Similarly as before, when $p =1$, we may consider its corresponding annihilator 
	\[
		\textstyle{\mathrm{Ann}_{1}(v) \coloneqq \setb{\xi \in \R^d}{ v \llcorner \xi^* = 0}}.
	\]
A similar (dual) proof to the one of Lemma~\ref{lem:A} yields the following result:
\begin{lemma}\label{lem:B} Let $V$ be an euclidean space of dimension $d$, let $m \in \{0,\dots,d\}$, and let $v \in \bigwedge_mV$ be a non-zero $m$-vector. Then
		\[
			\mathrm{Ann}_1(v) \in \mathrm{Gr}(d - \ell,V)  \quad\text{for some $0 \le \ell \le m$}.
		\]
Furthermore, if $v$ is a simple $m$-vector, then $\ell = d - m$.
\end{lemma}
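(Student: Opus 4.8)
The plan is to deduce the statement from Lemma~\ref{lem:A} by Hodge duality, reducing the study of $\mathrm{Ann}_1(v)$ for an $m$-vector $v$ to the study of $\mathrm{Ann^1}(\star v)$ for the dual $(d-m)$-covector $\star v$; I also indicate how to copy the proof of Lemma~\ref{lem:A} directly, with the interior product $\llcorner$ playing the role of the exterior product $\wedge$.

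\emph{Via Hodge duality.} Fix an orientation of $V$ and let $\star \colon \bigwedge_m V \to \bigwedge^{d-m} V$ be the associated Hodge isomorphism (post-composed with the metric identification $\bigwedge_* V \cong \bigwedge^* V$ if one prefers to stay with covectors). The standard commutation rule between interior multiplication and the Hodge star reads, in these degrees, $v \llcorner \xi^* = \pm\,\star\!\bigl(\xi^* \wedge \star v\bigr)$ for all $\xi \in V$; in particular $v \llcorner \xi^* = 0$ if and only if $\xi^* \wedge (\star v) = 0$. Since $v \neq 0$ forces $\star v \neq 0$, this is exactly the identity $\mathrm{Ann}_1(v) = \mathrm{Ann^1}(\star v)$, with $\star v$ a non-zero $(d-m)$-covector. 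Lemma~\ref{lem:A}, applied with $m$ replaced by $d-m$, then shows that $\mathrm{Ann}_1(v) = \mathrm{Ann^1}(\star v)$ is a linear subspace of $V$ of dimension at most $d-m$, hence of the form $\Gr(d-\ell,V)$ for an integer $\ell$; and if $v = v_1 \wedge \dots \wedge v_m$ is simple then $\star v$ is a simple $(d-m)$-covector, so the second part of Lemma~\ref{lem:A} pins down $\mathrm{Ann}_1(v) = \bigl(\spn\{v_1,\dots,v_m\}\bigr)^\perp$, a subspace of dimension exactly $d-m$.

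\emph{Directly.} If one wishes to avoid Hodge duality, one repeats the proof of Lemma~\ref{lem:A}. Bilinearity of $\llcorner$ makes $\mathrm{Ann}_1(v)$ a linear subspace. Lifting a suitable orthogonal automorphism of $V$ to $\bigwedge_* V$ (the lift intertwines the contraction) one reduces to the case $\mathrm{Ann}_1(v) = \spn\{e_1,\dots,e_k\}$ for an orthonormal basis $e_1,\dots,e_d$ of $V$; expanding $v = \sum_{|I|=m} v_I\, e_I$ in the induced basis of $\bigwedge_m V$, a one-line computation gives $v \llcorner e_i^* = \sum_{I \ni i} \pm\, v_I\, e_{I\setminus\{i\}}$, a combination of pairwise distinct basis $(m-1)$-vectors, so $v \llcorner e_i^* = 0$ forces $v_I = 0$ for every $I$ with $i \in I$. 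Letting $i$ run over $1,\dots,k$ shows that $v$ is supported on subsets of $\{k+1,\dots,d\}$, whence $m \le d-k$ and $\mathrm{Ann}_1(v) \in \Gr(k,V)$ with $k \le d-m$. For simple $v$ the $(m-1)$-vectors $v \llcorner v_i^*$ $(1 \le i \le m)$ are linearly independent while $v \llcorner w^* = 0$ whenever $w \perp \spn\{v_1,\dots,v_m\}$, which forces $k = d-m$.

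\emph{Main obstacle.} The one step that is not pure bookkeeping is the normalisation $\mathrm{Ann}_1(v) = \spn\{e_1,\dots,e_k\}$: one must know that an orthogonal automorphism $\phi$ of $V$ carrying a chosen orthonormal basis of $\mathrm{Ann}_1(v)$ onto $e_1,\dots,e_k$ lifts to an automorphism $\Phi$ of $\bigwedge_* V$ with $\Phi\bigl(v \llcorner \alpha^*\bigr) = \Phi(v) \llcorner (\phi\alpha)^*$. This is the exact dual of the lifting already invoked in the proof of Lemma~\ref{lem:A}, and it follows from functoriality of the exterior algebra together with orthogonality of $\phi$ (so that $\phi$ agrees with its inverse transpose under the identifications); granting it, the computations above are immediate. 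The Hodge-duality route bypasses this point entirely by delegating it to Lemma~\ref{lem:A}.
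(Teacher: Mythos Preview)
Your proposal is correct. The paper gives no proof of Lemma~\ref{lem:B} beyond the remark that ``a similar (dual) proof to the one of Lemma~\ref{lem:A} yields the following result''; your \emph{Direct} route is precisely that dual argument, so on that front there is nothing to add. Your \emph{Hodge duality} route is a genuinely different and slicker alternative: rather than reproducing the combinatorial argument of Lemma~\ref{lem:A} with $\llcorner$ in place of $\wedge$, you reduce the statement itself to Lemma~\ref{lem:A} via the identity $\mathrm{Ann}_1(v) = \mathrm{Ann}^1(\star v)$, which follows at once from $v \llcorner \xi^* = \pm\,\star(\xi^* \wedge \star v)$. This buys you the simple-vector case for free (since $\star$ maps simple $m$-vectors to simple $(d-m)$-covectors) and entirely sidesteps the normalisation step you flag as the ``main obstacle''.

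One remark concerning the statement rather than your proof: what you actually establish is $\dim \mathrm{Ann}_1(v) \le d-m$, i.e.\ $\ell \ge m$, with $\ell = m$ in the simple case. This is exactly what the paper \emph{uses} (to conclude $\ell_{\mathbbm d^*}=m$ in the application to normal currents), but the ranges printed in the Lemma, $0 \le \ell \le m$ and $\ell = d-m$, appear to be misprints for $m \le \ell \le d$ and $\ell = m$. Your proof matches the intended content.
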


\bibliography{zotero}
\bibliographystyle{plain}
\end{document}